\newcommand{\ee}{\mathbf{e}}
\def\e{\epsilon}
\def\t {\tau}
\def\t {\tau}
\def \be {  \varpi}
\newcommand{\fer}[1]{(\ref{#1})}
\newcommand{\R}{\mathbb R}
\def\be#1\ee{\begin{equation}#1\end{equation}}
\renewcommand{\theequation}{\arabic{section}.\arabic{equation}}
\newcommand{\bq}{\begin{equation}}
\newcommand{\eq}{\end{equation}}
\newtheorem{thm}{Theorem}
\newtheorem{prop}[thm]{Proposition}
\newtheorem{rem}[thm]{Remark}
 \numberwithin{equation}{section}
\newenvironment{equations}{\equation\aligned}{\endaligned\endequation}
\begin{document}

\title{On a class of Fokker--Planck equations with subcritical confinement
}

\titlerunning{Fokker--Planck equations with subcritical confinement}   
\author{G. Toscani, M. Zanella}

\authorrunning{Toscani, Zanella}

\institute{ \at
              Universit\`a degli Studi di Pavia, Dipartimento di Matematica "F.Casorati" \\
              \email{giuseppe.toscani@unipv.it, mattia.zanella@unipv.it. } 
}

\date{Received: date / Accepted: date}

\maketitle
\maketitle

\begin{flushleft}
\emph{Dedicated to the memory of Claudio Baiocchi}
\end{flushleft}

\vskip 1cm
\renewcommand{\theequation}{\thesection.\arabic{equation}}
\setcounter{equation}{0}
\begin{abstract}
We study the relaxation to equilibrium for a class  linear one-dimensional Fokker--Planck equations characterized by a particular subcritical confinement potential. %
An interesting feature of  this class of Fokker--Planck equations is that, for any given probability density $e(x)$,  the diffusion coefficient can be built to have $e(x)$ as steady state. This representation of the equilibrium density can be fruitfully used to obtain one-dimensional Wirtinger-type inequalities and to recover, for a sufficiently regular density $e(x) $, a polynomial rate of convergence to equilibrium.
Numerical  results  then confirm the theoretical analysis, and allow to conjecture that convergence to equilibrium with positive rate still holds for steady states characterized by a very slow polynomial decay at infinity.

\keywords{Fokker--Planck equations \and Relative entropy \and Wirtinger-type inequalities \and Relaxation to equilibrium}
\subclass{60A10 \and 60E15 \and 42A38}
\end{abstract}

\section{Introduction}
In the present work, we study the relaxation to equilibrium of the density function $f(x,t)$, $ t > 0$, $x \in \R$,  solution of the one-dimensional Fokker-Planck equation
\be\label{FP}
 \frac{\partial f(x, t)}{\partial t} = \frac{\partial }{\partial x}\left(\frac{\partial }{\partial x}
 \left( \kappa_\e(x) f(x,t)\right) +  
 \Theta_\e(x) f(x,t) \right),
 \ee
 characterized by  a subcritical confinement potential $P_\e(x)$, and a non-negative diffusion coefficient $\kappa_\e(x)$,   complemented with an initial condition $f(x,0) = f_0(x)$, $x\in \R$, which is a probability density with some moments bounded.  

More precisely, we will  assume that the confinement potential $P_\e(x)$ generates a force field (the drift)
 \be\label{force}
 \Theta_\e(x) = \frac d{dx}P_\e(x)
 \ee
that is an approximation of the Heaviside step function $x/|x|$, where the rate of approximation is characterized by the small positive parameter $\e \ll 1$.  A typical example  is the one associated to  the logistic function  $\Theta_\e$, expressed by
 \be\label{tangh}
 \Theta_\e(x) = \tanh\left(\frac x\e\right) = \frac{e^{x/\e} - e^{-x/\e}}{e^{x/\e} + e^{-x/\e}}.
  \ee
Fokker--Planck type equations  describe a huge variety of relaxation phenomena, ranging from biology to social and economic sciences, see e.g. \cite{FPTT,MV,NPT,PT13,Ris}.  In view of its structure, the solution to equation \fer{FP} is mass and positivity preserving, so that, for all $t \ge 0$
 \be\label{mas}
 \int_\R f(x,t)\, dx =  \int_\R f_0(x)\, dx.
 \ee
Consequently, without loss of generality, one can assume that  $f_0$ is a probability density on $\R$, so that the solution $f(x,t)$ remains a probability density for any subsequent time. For various choices of the drift and diffusion coefficients, which are heavily dependent on the physical problem under study,  the solution of \fer{FP} relaxes in time towards a unique macroscopic equilibrium function, in the form of a probability density.
The equilibrium state of \fer{FP} coincides with the solution of the first order differential equation
 \be\label{eq-eq}
 \frac{d}
 {d x}\left( \kappa_\e(x) f(x)\right) + 
 \Theta_\e(x) f(x)= 0,
 \ee
and it has the form
 \be\label{eq}
 e (x) =  \frac C{\kappa_\e(x)} \exp \left\{-\int_0^x \frac{\Theta_\e(y)}{\kappa_\e(y)}  \, dy \right\}.
  \ee
In \fer{eq}  the constant $C$ is chosen to render $e(x)$ a probability density on $\R$. 

Among other theoretical questions, the knowledge of the time rate at which this equilibrium is reached, is one of the main mathematical problems to be investigated. In this direction, the interesting feature of the class of Fokker--Planck equations \fer{FP} is that, if one fixes  the shape of the probability density  $e(x)$ a priori,   the diffusion coefficient $\kappa_\e(x)$ can be suitably balanced to represent $e(x)$ in the form \fer{eq}. 

For the classical Fokker--Planck equation \cite{Ch43}, where $\kappa_\e(x) =1$, while $P_\e(x) = x^2/2$, the steady state of unit mass is  the Gaussian density
\be\label{max}
g(x) = \frac1 {\sqrt{2\pi}} \exp \left\{- \frac{x^2}2 \right\},
\ee
and many results are available (cf. \cite{MV,To97,To99} and the references therein). In particular, it is known that, if the relative Shannon entropy  between the solution density $f(\cdot,t)$ and the equilibrium $g$, expressed by
\be\label{entr}
H(f(t)|g) = \int_{\R} f(x,t) \log \frac{f(x,t)}{g(x)}\, dx,
\ee
is bounded at time $t=0$, the relative entropy decays exponentially in time towards zero with an explicit rate, thus ensuring convergence towards equilibrium in $L^1(\R)$ at explicit exponential rate. A non secondary mathematical result related to this relaxation problem is that the study of the time decay of the relative entropy towards equilibrium allowed to obtain a new physical proof of the sharp logarithmic Sobolev inequality \cite{AMTU,To97,To99}.

In the case of a constant diffusion coefficient,  $\kappa_\e(x) =1$, where the equilibrium density is closely related to the confinement potential
 \be\label{eq-1}
\tilde e(x) = C e^{-P_\e(x)},
 \ee 
the convergence rate of the solution towards equilibrium has been studied in many situations. From one side, exponential convergence to equilibrium in relative entropy has been proven for strongly convex potentials, namely when
 \be\label{ro}
 \frac {d^2}{dx^2}P_\e(x) \ge \rho >0.
 \ee
Similarly to the classical Fokker--Planck equation,   exponential convergence in relative entropy was the main tool to show that the densities of type \fer{eq-1}, under condition \fer{ro}, satisfy a logarithmic Sobolev inequality \cite{OV}, previously proven in \cite{BE} by different methods. 

Still in presence of a constant diffusion coefficient, various attempts have been devoted to recover the rate of convergence to equilibrium when the strong convexity of  potential is missing. 
Starting with the analysis in \cite{TV}, relative to a regular subcritical confinement potential  $P_\e(x) \in W^{2,\infty}_{loc}$, behaving at infinity like  $|x|^\alpha +C$, where $0<\alpha<2$,  a number of other results are presently available \cite{BCG,DFG,Goz,RW}. These results show that in general exponential convergence towards equilibrium of the solution of the Fokker--Planck equation does not hold, and it is substituted by a polynomial convergence as $t^{-\delta}$, with $\delta >0$ linked to the properties of confinement potential. An almost complete list of references and results on this topic can be found in the recent paper \cite{KMN}.

A new impulse to study  relaxation to equilibrium for Fokker--Planck type equations with variable coefficient of diffusion came with the mathematical modeling of economic and social phenomena. The study of the evolution in time of wealth distribution in a multi-agent society through kinetic equations led to consider a Fokker-Planck equation in $\R^+$ with a quadratic potential and diffusion coefficient $\kappa_\e(x) = x^2$ \cite{BM,CoPaTo05,DT2,GT1,GT2,GT3,PTZ}, with an equilibrium distribution characterized by fat tails, or, more generally by generalized Gamma distributions \cite{DT1,Sta}. Likewise, the statistical study of opinion formation introduced into the field a Fokker-Planck equation  for the opinion variable ranging in the interval $[-1,1]$,  with a quadratic potential and diffusion coefficient $1-x^2$ \cite{Tos06}. A recent thorough review of this type of applications of Fokker--Planck type equations is presented in \cite{FPTT}.

The study of convergence rates for this new class of Fokker--Planck equations has been developed in recent years, by adapting the study of the decay in relative entropy to the new situation of variable coefficient of diffusion
\cite{FPTT19,TT,TT1}. These studies were complemented with the consideration of new differential inequalities, like Chernoff inequality \cite{Cher,FPTT,Kla}, that appeared essential to prove  convergence towards equilibria with fat tails. In this situation, some recent results indicate that exponential convergence in presence of equilibria with fat tails can be achieved in presence of a confinement potential  $P_\e(x)$  behaving at infinity like  $|x|^\alpha +C$, where $\alpha>2$ \cite{FPTT20,To20}. In particular, the analysis of Fokker--Planck type equations with this type of confinement allowed to improve  in \cite{FPTT21} previous results concerned with logarithmic Sobolev and Poincar\'e inequalities with weight \cite{BL,BJ,BJM1,BJM2,CGGR}. 

At present, almost nothing is known about convergence rates to equilibrium for Fokker--Planck equations with a subcritical potential and a variable coefficient of diffusion. This lack of knowledge motivates the present research. 

In details, we show that the solution to the Fokker--Planck equation \fer{FP} converges to equilibrium with a computable rate in various situations, which include both the cases in which the coefficient of diffusion $\kappa_\e$ is uniformly bounded, and the case in which the coefficient of diffusion is unbounded, but the equilibrium density has a certain number of moments bounded. Precise results are given for the Gaussian equilibrium \fer{max} ($\kappa_\e$ bounded) and for the generalized Gaussian equilibrium density
 \[
e(x) = C_\beta \dfrac{1}{(1+x^2)^\beta}, \qquad C_\beta = \dfrac{1}{\sqrt{\pi}}\dfrac{\Gamma(\beta)}{\Gamma(\beta-1/2)}, \quad \beta > 1,
\]
which corresponds to a unbounded diffusion coefficient $\kappa_\e(x) \ge c\sqrt{1+x^2}$. The numerical evidence of the decay of the relative entropy \fer{entr} for the solution to the Fokker--Planck type equation \fer{FP} in this last situation is a challenging problem, which can enlighten the missing theoretical analysis for generalized Gaussian densities with exponent $\beta$ in the lower range of the parameter $1/2 < \beta \le 1$, which is not covered by the theoretical results collected in Section \ref{sec:unbounded}. The numerical approximation of the Fokker--Planck equation confirms that the rate of decay is inversely proportional to $\beta$, but that the relative entropy still decays towards zero for $\beta = 1$.

\section{Main properties of the Fokker--Planck equation}

In the one-dimensional situation, existence and uniqueness of solutions to the Fokker-Planck equations \fer{FP} have been studied in a pioneering paper by Feller \cite{Fe52}. The results in \cite{Fe52} require that the diffusion coefficient $\kappa_\e$, its derivative $\kappa_\e'$ and the drift $\Theta_\e$ are continuous, but not necessarily bounded, in the interior of the domain, where $\kappa_\e> 0$. These regularity hypotheses were subsequently relaxed, cf. \cite{LL1} and the references therein. According to the analysis in \cite{LL1}, in the rest of the paper  we will assume that in equation \fer{FP}  the diffusion coefficient
 $\kappa_\e \in W^{1,2}_{loc}(\R)$, while the drift   $\Theta_\e \in W^{1,1}_{loc}(\R)$,  with 
 \be\label{cond1}
 \frac d{dx}\Theta_\e(x) \in L^\infty(\R),
 \ee
 and
 \be\label{cond2}
 \frac{\kappa_\e(x)}{1+|x|} \in L^\infty(\R),\quad  \frac{\Theta_\e(x)}{1+|x|} \in L^\infty(\R).
 \ee
 Under conditions \fer{cond1} and \fer{cond2} we can apply the results in \cite{LL1} to conclude that,  for each initial condition $f_0(x) \in L^1 \cap L^\infty(\R)$ and time interval $[0,T]$,  the Fokker--Planck equation \fer{FP}  has a unique solution  $f(x,t) \in L^\infty([0,T], L^1\cap L^\infty(\R))$. 
 
 Hence, since $\theta_\e(x)$, as given by \fer{tangh},  satisfies the aforementioned conditions, for each diffusion coefficient $\kappa_\e(x)\in W^{1,2}_{loc}(\R)$  satisfying condition \fer{cond2} we have a good existence and uniqueness theory.

 We further assume that the equilibrium density $e(x)$ is an even function on $\R$, so that its median value is equal to zero. This condition can be  removed, at the price of an increasing amount of computations (cf. \cite{FPTT21}), by considering in \fer{tangh} 
 \[
\Theta_\e(x) = \tanh\left(\frac{x -\bar x}\e\right),  
 \]
where $\bar x \not= 0$ is the median of the general probability density $e_\kappa(x)$. 

One of the interesting consequences  of the choice of a drift term like \fer{tangh} in the Fokker--Planck equation  \fer{FP}, is that we can associate to any given  equilibrium density $e(x)$, a unique coefficient of diffusion $\kappa_\e(x)$ such that the steady state of \fer{FP}, solution to \fer{eq-eq}, is exactly equal to $e(x)$. This property has been highlighted in \cite{FPTT21}  in case the drift function is the Heaviside step function with the jump in the point $\bar x$, median of the probability density $e(x)$, and used there to prove Wirtinger-type inequalities. Let us briefly recall this construction in the case $\bar x=0$.

Let $X$ be a random variable with an absolutely continuous even density $e(x)$, $x \in \R$
such that $e(x) >0$, and let $F(x)$, $x \in \R$, denote its distribution function, defined as usual by the formula
 \be\label{distri}
 F(x) = \int_{-\infty}^x e(y)\, dy \le 1.
 \ee
Since the median of the random variable $X$ is equal to zero,  the increasing function  $F(x)$ satisfies  $F(0)= 1/2$. Last, let $\kappa(x)$ be defined as the nonnegative function 
\be\label{peso}
\kappa(x) = \frac{F(x)}{e(x)} \quad {\rm{if}}\,\, x <0; \quad \kappa(x) = \frac{1- F(x)}{e(x)} \quad {\rm{if}}\,\,  x >0.
\ee
Then, $\kappa(x)$ is a continuous even function on $\R$, and  for $x \ne 0$ we have the identity
 \be\label{chiave2}
 e(x) = - \frac{x}{|x|}\frac d{dx}\left[\kappa(x) e(x) \right].
 \ee
As remarked in \cite{FPTT21}, formula \fer{chiave} is a useful way to characterize the density $e(x)$ as the steady state of a Fokker--Planck equation of type \fer{FP} where the diffusion coefficient is the continuous nonnegative even function
$ \kappa_\e(x) = \kappa(x)$
and the drift term is given by the Heaviside step function
$ \Theta_\e(x) = {x}/{|x|}$. 

Since the Heaviside function is not regular enough for our purposes, we extend this construction to cover the case of the drift function \fer{tangh}. 

Given the random variable $X$ with the absolutely continuous even density $e(x)$, we introduce the function 
 \be\label{distri2}
 F_\e(x) = \frac 1{m_\e} \int_{-\infty}^x |\Theta_\e(y)| \, e(y)\, dy 
 \ee
 where 
 \be\label{me}
 m_\e = \int_{-\infty}^{+\infty} |\Theta_\e(y)| \, e(y)\, dy  <1.
 \ee
Then, for any given $\e >0$, $F_\e(x)$ is a distribution function, and, since the median of the random variable $X$ is equal to zero, and $|\Theta_\e|$ is an even function,  the increasing function  $F_\e(x)$ satisfies  $F_\e(0)= 1/2$. Now, let $\kappa_\e(x)$ be defined as the nonnegative function 
\be\label{peso-2}
\kappa_\e(x) = m_\e \frac{F_\e(x)}{e(x)} \quad {\rm{if}}\,\, x < 0; \quad \kappa_\e(x) = m_\e \frac{1- F_\e(x)}{e(x)} \quad {\rm{if}}\,\,  x >0.
\ee
Then, $\kappa_\e(x)$ is still a continuous even function on $\R$, and  for $x \in \R$ we have the identity
 \be\label{chiave}
 \frac d{dx}\left[\kappa_e(x) e(x) \right] + \Theta_\e(x) e(x) = 0.
 \ee
\begin{rem}\label{rem:1} It is important to remark that, even in presence of the smooth drift $\Theta_\e$, the regularity of the diffusion coefficient depends on the regularity of the equilibrium function. Indeed
 \be\label{k-primo}
 \frac d{dx}\kappa_e(x)  =  - \Theta_\e(x) - \kappa_\e(x)  \frac d{dx}\log e(x).
 \ee
For example, for the density function $e(x)= \exp\{ -|x|\}/2$,  $\kappa_\e'(x)$ has a discontinuity in $x=0$.
\end{rem}
It is immediate to show that, for any given even probability density $e(x)$,  the  functions $\kappa(x)$ and $\kappa_\e(x)$ are closely related. Indeed we have

\begin{prop}\label{prop:1} Let $e(x)$, $x \in \R$, be a even probability density such that $e(x) >0$,  and let $\kappa(x)$ and $\kappa_\e(x)$ be defined by \fer{peso}, and, respectively, by \fer{peso-2}. Then, provided $\e \ll 1$ is suitably small, there exist  positive constants $1/2 < c(\e) <1$ and $1 <C(\e)< 3/2$,  which depends on the density $e$, such that
 \be\label{good}
 c(\e) \kappa(x) \le \kappa_\e(x) \le  C(\e) \kappa(x).
 \ee
 \end{prop}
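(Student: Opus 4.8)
The plan is to reduce the whole statement to the pointwise ratio $R(x):=\kappa_\e(x)/\kappa(x)$ and to show that it is squeezed between $m_\e$ and $1$. First I would exploit that, for the logistic drift \fer{tangh}, $|\Theta_\e(y)|=\tanh(|y|/\e)$ is an even function with values in $[0,1)$ that is strictly increasing in $|y|$. Writing $F(x)=\int_{-\infty}^x e(y)\,dy$ and recalling $m_\e F_\e(x)=\int_{-\infty}^x|\Theta_\e(y)|\,e(y)\,dy$, the definitions \fer{peso} and \fer{peso-2} give, for $x<0$,
\[
R(x)=\frac{\int_{-\infty}^x \tanh(|y|/\e)\,e(y)\,dy}{\int_{-\infty}^x e(y)\,dy},
\]
and, since $e$ and $|\Theta_\e|$ are even, the analogous tail-integral expression with $\int_x^{+\infty}$ for $x>0$. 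Hence it suffices to treat $x<0$ and transfer the conclusion to $x>0$ by symmetry. The upper bound in \fer{good} is then immediate: because $\tanh(|y|/\e)<1$, the numerator of $R(x)$ is strictly below its denominator, so $\kappa_\e(x)\le\kappa(x)$, which gives the right-hand inequality for any $C(\e)\in(1,3/2)$.

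The substantive part is the lower bound $R(x)\ge m_\e$. My plan is to show that $R$ is monotone on $(-\infty,0)$. Setting $G(x)=\int_{-\infty}^x e$ and $H(x)=\int_{-\infty}^x \tanh(|y|/\e)\,e$, differentiation of the quotient yields
\[
R'(x)=\frac{e(x)\big[\tanh(|x|/\e)\,G(x)-H(x)\big]}{G(x)^2}.
\]
For $y<x<0$ one has $|y|>|x|$ and thus $\tanh(|y|/\e)>\tanh(|x|/\e)$, so $H(x)>\tanh(|x|/\e)\,G(x)$ and therefore $R'(x)<0$. Consequently $R$ is strictly decreasing on $(-\infty,0)$ and its infimum is the limit at $x\to 0^-$; using $F(0)=1/2$ and $F_\e(0)=1/2$, this limit equals $m_\e$. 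Equivalently, and avoiding differentiation, one may observe that $R(x)=\E_\mu[\tanh(|Y|/\e)\mid Y<x]$ for the probability measure $d\mu=2e\,dy$ on $(-\infty,0)$, and since both $\tanh(|y|/\e)$ and $\mathbf{1}_{\{y<x\}}$ are decreasing in $y$, the Chebyshev correlation inequality gives $R(x)\ge\E_\mu[\tanh(|Y|/\e)]=m_\e$. Either way, $\kappa_\e(x)\ge m_\e\,\kappa(x)$ for all $x$, which is the left-hand inequality with $c(\e)=m_\e$.

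It then remains to place the constants in the required ranges. Since $\tanh(|y|/\e)<1$ and $e>0$, we have $m_\e<1$, in agreement with \fer{me}; this already yields $c(\e)=m_\e<1$. For $c(\e)>1/2$ I would invoke dominated convergence: as $\e\to 0$, $\tanh(|y|/\e)\to 1$ for every $y\neq 0$, whence $m_\e=\int_\R \tanh(|y|/\e)\,e(y)\,dy\to\int_\R e(y)\,dy=1$. Thus for $\e$ sufficiently small $m_\e\in(1/2,1)$, which fixes $c(\e)$ in the prescribed interval and, a fortiori, allows $C(\e)$ to be chosen in $(1,3/2)$.

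The one genuinely delicate point is the uniform-in-$x$ lower bound in the neighbourhood of $x=0$, where $R$ attains its infimum $m_\e$ exactly (indeed $R(0)=m_\e$ by continuity of the even functions $\kappa,\kappa_\e$). Away from the origin the integrand $\tanh(|y|/\e)$ is close to $1$ and the bound is easy, so the monotonicity (or correlation) argument is precisely what is needed to control this worst case; all remaining steps are elementary manipulations of the defining integrals.
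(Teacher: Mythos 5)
Your proposal is correct, and for the substantive part --- the lower bound --- it takes a genuinely different route from the paper. The paper argues by cases: for $|x|\ge\e$ it uses $|\Theta_\e(x)|\ge\tanh(1)>1/2$ to compare the two tail integrals directly, while for $|x|<\e$ it inserts the cruder factor $2\int_\e^{\infty}e(y)\,dy$ and chooses $\e$ small enough (depending on $e$) that $2\tanh(1)\int_\e^{\infty}e(y)\,dy\ge 1/2$; this produces a lower constant of roughly $1/2$ and gives no information about where the worst case sits. You instead study the ratio $R(x)=\kappa_\e(x)/\kappa(x)$, recognize it as a conditional expectation of $\tanh(|Y|/\e)$, and obtain the sharp pointwise bounds $m_\e\le R(x)\le 1$ by monotonicity, or by the Chebyshev correlation inequality; the infimum $m_\e$ is attained exactly at the origin and the supremum $1$ is approached as $|x|\to\infty$, so your constants $c(\e)=m_\e$ and any $C(\e)\in(1,3/2)$ are optimal. (Note that the paper's upper-bound computation, $F_\e(x)\le F(x)/m_\e$, after multiplying by $m_\e$ actually also yields $\kappa_\e\le\kappa$, although it is stated with the looser constant $3/2$.) What your approach buys: sharp constants, no case splitting near the origin, and the smallness of $\e$ enters only through the transparent requirement $m_\e>1/2$, verified by dominated convergence exactly as the paper verifies $m_\e>2/3$. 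What the paper's approach buys: it stays at the level of pointwise bounds on $\tanh$ and elementary integral comparisons, without appealing to any correlation inequality. One small caution: your differentiation of $R$ tacitly assumes enough regularity of $e$ to differentiate the quotient everywhere; since the proposition only assumes $e$ is a positive even probability density, the correlation-inequality version of your argument (which needs no differentiability at all) is the one to retain as the rigorous proof.
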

 
 \begin{proof}
 
The upper bound directly follows from the definition of $\kappa_\e$. Indeed, since $m_\e \to 1$ as $\e \to 0$, we can choose $\e_0 \ll 1$ such that, for all $\e \le \e_0$, $m_\e > 2/3$. Then,  if $x <0$
 \[
 F_\e(x) \le \frac 1{m_\e} F(x) < \frac 32 F(x), 
 \]
which implies $C(\e) < 3/2$   for any value $x <0$. By symmetry, the same bound holds for $x>0$. For the lower bound, consider that, for $x \ge \e$ the increasing function $\Theta_\e(x)$ satisfies
 \[
 \Theta_\e(x) \ge \Theta_\e(\e) = \frac{1- e^{-2}}{1+ e^{-2}} > \frac 12.
 \]
so that, since $m_\e <1$,
\[
1- F_\e(x)=  \frac 1{m_\e}  \int_x^{\infty}|\Theta_\e(y)| \, e(y)\, dy  \ge \int_x^{\infty}|\Theta_\e(y)| \, e(y)\, dy \ge 
 \]
 \[
\frac{1- e^{-2}}{1+ e^{-2}}(1- F(x)) > \frac 12(1-F(x)).
\]
Finally, if $0<x<\e$ 
 \[
1-F_\e(x) = \frac 1{m_\e}\int_x^{\infty}|\Theta_\e(y)| \, e(y)\, dy \ge   \int_\e^{\infty}|\Theta_\e(y)| \, e(y)\, dy \ge 
 \]
 \[
\frac{1- e^{-2}}{1+ e^{-2}}\int_\e^{\infty} e(y)\, dy \ge \frac{1- e^{-2}}{1+ e^{-2}}\int_\e^{\infty} e(y)\, dy \left[2\, \int_x^{\infty} e(y)\, dy \right].
 \]
 On the other hand, since 
 \[
\lim_{\e \to 0} 2 \int_\e^{\infty} e(y)\, dy = 1, 
 \]
we can fix $\e_0\ll 1 $ such that, for $\e \le \e_0$ 
\[
2 \frac{1- e^{-2}}{1+ e^{-2}}\int_\e^{\infty} e(y)\, dy \ge \frac 12.
\]
This concludes the proof.
\end{proof}

We remark that the chain of inequalities \fer{good} allows to conclude that, for $\e \ll 1$ the main properties of the diffusion coefficient $\kappa_\e(x)$ can be easily derived by looking directly to the coefficient $\kappa(x)$. 

Thanks to the previous result, for any given probability density,  we can work directly on equation \fer{chiave} to evaluate the associated function $\kappa(x)$, to understand how the characteristics of the probability density are  reflected into the  diffusion coefficient, and ultimately into the relaxation rate towards equilibrium. We consider in the following two main examples, which refer to the cases of a density rapidly decaying at infinity, and of a density with fat tails.

\subsection{\emph{The diffusion coefficient of a Gaussian density}}\label{sect:gauss}

Let the probability density $g(x)$, $x \in \R$ be the Gaussian density defined in \fer{max}. Then, if $x>0$ formula \fer{peso} gives
 \[
 \kappa(x) = \frac{\int_x^{+\infty} g(y)\, dy}{g(x)} =  \int_x^{+\infty} e^{ -\frac 12 (y^2-x^2)}\, dy.
 \]
The integral on the right-hand side can be evaluated by substitution, setting $z^2 = y^2 -x^2$, to give
 \be\label{k-gauss}
 \kappa(x) = \int_0^{+\infty} \frac z{\sqrt{z^2 +x^2}}e^{ -\frac{1}2 z^2}\, dz.
  \ee
Since $g(x)$ is an even function, the same result holds when $x <0$. Therefore, since
$z/{\sqrt{z^2 +x^2}} \le 1$, we obtain
 \[
  \kappa(x) \le \int_0^{+\infty} e^{ -\frac{1}2 z^2}\, dz = \sqrt{\frac \pi{2}}. 
 \]
 Likewise, since for $|x|,z >0$ it holds the inequality $z/{\sqrt{z^2 +x^2}} \le z/|x|$, we have
 \[
  \kappa(x) \le \frac 1{|x|}\int_0^{+\infty}z \,e^{ -\frac{1}2 z^2}\, dz =  \frac 1{|x|}.
 \]
Finally, we conclude with the upper bound
 \be\label{ine-gauss}
  \kappa(x) \le \min\left\{\sqrt{\frac \pi{2}}; \frac 1{|x|}\right\},
 \ee
which shows that to obtain a Gaussian equilibrium in presence of a weak drift like \fer{tangh}, the diffusion coefficient has to be uniformly bounded, and  vanishing at infinity at the rate $1/|x|$.  By definition, for $\e >0$ we have instead
 \be\label{ke-gauss}
 \kappa_\e(x) = \frac 1{m_\e}\int_0^{+\infty} \Theta_\e\left( \frac z{\sqrt{z^2 +x^2}}\right)\cdot \frac z{\sqrt{z^2 +x^2}}e^{ -\frac{1}2 z^2}\, dz
  \ee
and $\kappa_\e \in W^{1,2}_{loc}(\R)$.

\subsection{\emph{The diffusion coefficient of a generalized Gaussian density}}\label{sect:gen_gauss}

For any given positive constant $\beta >1/2$, let the probability density $g_\beta(x)$, $x \in \R$, be the generalized Gaussian density 
 \be\label{gen}
 g_\beta(x) = \frac{C_\beta}{(1+x^2)^\beta},
 \ee
 with 
 \be\label{mass1}
 C_\beta = \frac 1{\sqrt{\pi}}\frac{\Gamma(\beta)}{\Gamma\left( \beta -\frac12 \right)}.
 \ee
Then, if $x>0$ formula \fer{peso} gives
 \[
 \kappa(x) = \frac{\int_x^{+\infty} g_\beta(y)\, dy}{g_\beta(x)} =  \int_x^{+\infty}\left( \frac{1+x^2}{1+y^2}\right)^\beta \, dy.
 \]
The integral on the right-hand side can be evaluated by substitution, setting $1+y^2 = (1+x^2)(1+z^2)$, to obtain
 \be\label{k-gauss}
 \kappa(x) = \int_0^{+\infty} \frac{z(1+x^2)}{\sqrt{z^2(1+x^2) +x^2}} \frac{1}{(1+z^2)^\beta} \, dz.
  \ee
Since $g_\beta(x)$ is an even function, the same result holds when $x <0$. In this case we have the upper bound
\[
\frac{z(1+x^2)}{\sqrt{z^2(1+x^2) +x^2}}  \le \sqrt{1 +x^2}, 
\]
which implies
\be\label{ge}
  \kappa(x) \le \sqrt{1 +x^2} \,\int_0^{+\infty} \frac{1}{(1+z^2)^\beta} \, dz =  \gamma_\beta \sqrt{1 +x^2},
 \ee
 where 
 \be\label{c-gamma}
  \gamma_\beta = \sqrt{\frac \pi{4}}\frac{\Gamma\left( \beta -\frac12 \right)}{\Gamma(\beta)}.
 \ee
 Also, since
 \[
  \frac{z(1+x^2)}{\sqrt{z^2(1+x^2) +x^2}} \ge  \frac{z(1+x^2)}{\sqrt{z^2(1+x^2) +1 +x^2}} = \sqrt{1+x^2}  \frac{z}{\sqrt{1+ z^2}},
 \]
substituting in \fer{k-gauss}  we obtain the lower bound
 \be\label{l-k}
   \kappa(x) \ge \sqrt{1 +x^2} \,\int_0^{+\infty}\frac{z}{\sqrt{1+ z^2}} \frac{1}{(1+z^2)^\beta} \, dz =  \frac 1{2\beta-1} \sqrt{1 +x^2}.
 \ee
Unlike the Gaussian case,  the diffusion coefficient is not uniformly bounded, and  diverges at infinity at the rate $|x|$.  For this reason, it results difficult to recover rates of convergence towards equilibrium in this case. For any $\e >0$ we obtain 
 \be\label{ke-g}
 \kappa_\e(x) = \frac 1{m_\e}\int_0^{+\infty} \Theta_\e\left( \frac z{\sqrt{z^2 +x^2}}\right)\cdot \frac{z(1+x^2)}{\sqrt{z^2(1+x^2) +x^2}} \frac{1}{(1+z^2)^\beta} \, dz,
  \ee
and similarly to the Gaussian case $\kappa_\e \in W^{1,2}_{loc}(\R)$.

\section{Entropy decay}

Classically, Fokker-Planck-type equations like \fer{FP} may be suitably rewritten in the so-called Landau form.  
This reformulation is particularly useful to study the decay of the relative entropy defined in \fer{entr}. To this extent consider that, since $\kappa_\e(x) >0$, we can express the quantity on the left-hand side of equation \fer{eq-eq} as
 \[
\frac{\partial }{\partial x}\left(\kappa_\e(x)
 f \right) + \Theta_\e(x)\,f = \kappa_\e(x) f \left( \frac{\partial }{\partial x} \log(\kappa_\e(x) f) + \frac{\Theta_\e(x)}{\kappa_\e(x)}\right)=
 \]
 \[
  \kappa_\e(x) f \left( \frac{\partial }{\partial x} \log(\kappa_\e(x) f)- \frac{\partial }{\partial x} \log(\kappa_\e(x) e) \right) =   \kappa_\e(x) f  \frac{\partial }{\partial x} \log\frac f{e}.
   \]
Thus we can write the Fokker--Planck equation \fer{FP} in the equivalent form
 \be\label{FPalt}
  \frac{\partial f}{\partial t} = \frac{\partial }{\partial x}\left[ \kappa_\e(x) f \frac{\partial }{\partial x} \log\frac f{e}\right],
 \ee
 in which the drift function $\Theta_\e(x)$ is hidden in the equilibrium density $e(x)$, as defined in \fer{eq}.
 Using equation \fer{FPalt}, and owing to mass conservation \fer{mas}, it is a simple exercise to show that \cite{FPTT}
 \be\label{dec-ent}
  \frac{d}{d t} H(f(t)|e) = -I(f(t)|e) = -\int_\R \kappa_\e(x) f(x,t)  \left|\frac{\partial }{\partial x}\log \frac{f(x,t)}{e(x)} \right|^2 \, dx, 
 \ee
where the relative Shannon entropy $H(f(t)|e)$ has been defined in \fer{entr}.
The non negative quantity $I(f(t)|e)$ is usually referred to as entropy production. It is worth to remark that the entropy production can be equivalently rewritten as follows
 \be\label{dec-ent2}
 I(f(t)|e) = 4\int_\R \kappa_\e(x) e(x)  \left|\frac{\partial }{\partial x}\sqrt{ \frac{f(x,t)}{e(x)}} \right|^2 \, dx.
 \ee
As shown in \cite{FPTT}, other functionals are non increasing in time along the solution to the Fokker-Planck equation \fer{FP}. This is the case of the functional given by the square of the Hellinger distance of $f(t)$ and $e$ defined as
 \be\label{hell}
 d_H^{\null\,2}(f(t),e )=  \int_\R  \left( \sqrt{f(x,t)}- \sqrt{e(x)} \right)^2\, dx.
  \ee
The Hellinger distance satisfies
\be
  \label{sqr}
   \frac {d}{d t} d^{\null\,2}_H(f(t),e)  = -  I_H (f(t),e)
 =  - 8\int_\R \kappa_\e(x) e(x)  \left|\frac{\partial }{\partial x}\sqrt[4]{ \frac {f(x,\t)}{e(x)}}\right|^2 \, dx,
 \ee
 see  \cite{FPTT} for further details.
 
 \subsection{\emph{Bounded coefficients of diffusion}}\label{sec:bounded}
 
  It is interesting to remark that, provided that $\kappa(x) \le M<\infty$, the entropy production \fer{dec-ent2} and the Hellinger distance \fer{hell} can be 
 related through a Wirtinger-type inequality with weight, recently proven in \cite{FPTT21}.  For any given function $\phi$, let $E(\phi(X))$ denote the mathematical expectation of the random variable $X$ distributed with density $e(x)$, $x \in  \R$,
  \[
E(\phi(X)) = \int_\R \phi(x) e(x) \, dx.  
  \]
  Let  $\kappa(x)$  by defined by equation \fer{chiave}. Then,  for any smooth function $\phi$    on $\R$ such that $ E\left[ |\phi(X)|^p \right]$ is bounded,  $1 \le p<+\infty$, it holds 
 \be\label{Wi-gen}
 E\left[ |\phi(X)- E(\phi(X))|^p \right] \le (2p)^{\null \,p} E\left[ \kappa(X)^{p}\, |\phi'(X)|^p \right]. 
 \ee
 Applying inequality \fer{Wi-gen} with $p=2$ and $\phi(x) = \sqrt{f(x,t)/e(x)}$ we obtain
 \begin{equations}\label{bello}
& \int_\R\left( \sqrt{\frac{f(x,t)}{e(x)}} - \int_\R \sqrt\frac{f(x,t)}{e(x)}\, e(x)\,dx\right)^2 e(x) \, dx \le \\
 &\qquad\qquad\qquad\qquad \qquad\qquad4\int_\R \kappa^2(x) e(x)  \left|\frac{\partial }{\partial x}\sqrt{ \frac{f(x,t)}{e(x)}} \right|^2 \, dx.
 \end{equations}
 Now, the left-hand side of inequality \fer{bello} can be bounded in the following way 
 \be\label{formula}
\begin{aligned}
& \int_\R\left( \sqrt{\frac{f(x,t)}{e(x)}}- \int_\R \sqrt\frac{f(x,t)}{e(x)}\, e(x)\,dx\right)^2 e(x) \, dx = \\
&\qquad\left( \int_\R \frac {f(x, t)}{e(x)} e(x)) \ dx -   \left( \int_R\sqrt {\frac {f(x,t)}{e(x)}} e(x)\ dx\right )^2\right ) = \\
&\qquad\left(  1 -  \left(\int_\R \sqrt{ {f (x,t)}\, {e(x) }} \ dx\right )^2\right ).
\end{aligned}
\ee
On the other hand, whenever $f(\cdot,t)$ and $e$ are probability density functions
\be\label{hell-dis}
\begin{aligned}
&\int_\R \left(\sqrt {f (x,t)} -\sqrt {e(x) }\right )^2\ dx = \\
&\qquad\int_\R\left( f (x,t) + e(x) - 2\sqrt{f(x,t)\, e(x)}\right ) dx =\\
&\qquad 2\left(1-\int_\R \sqrt {f(x,t)\, e(x)}  \ dx \right ) \leq 2\left(1-\left(\int_\R \sqrt {f(x,t)\, e(x)}\ dx \right )^2\right ).
\end{aligned}
\ee
The last inequality in \fer{hell-dis} follows by  Cauchy--Schwartz inequality. Therefore, taking into account equality \fer{formula} and inequality \fer{hell-dis} we obtain the inequality
\be\label{bbb}
d_H^{\null\,2}(f(t),e ) \le 2 \int_\R\left( \sqrt{\frac{f(x,t)}{e(x)}} - \int_\R \sqrt\frac{f(x,t)}{e(x)}\, e(x)\,dx\right)^2 e(x) \, dx.
\ee
Finally, thanks to inequality \fer{bello} and to the result of Proposition \ref{prop:1} we have
 \begin{equations}\label{ccc}
& d_H^2(f(t),e ) \le 8\int_\R {\frac 94}\kappa^2(x) e(x)  \left|\frac{\partial }{\partial x}\sqrt{ \frac{f(x,t)}{e(x)}} \right|^2 \, dx \le \\
&\qquad\qquad  18M \int_\R \kappa(x) e(x)  \left|\frac{\partial }{\partial x}\sqrt{ \frac{f(x,t)}{e(x)}} \right|^2 \, dx = 18M I(f(t)|e).
 \end{equations}
 
Suppose now that the initial relative entropy $H(f_0|e)$ is bounded. Then, integrating inequality  \fer{dec-ent} from $0$ to $+\infty$ we get
 \[
 \int_0^\infty I(f(t)|e)\, dt \le H(f_0|e).
 \]
 Hence, the entropy production $I(f(t)|e)$ is integrable over $\R^+$. If the diffusion coefficient $\kappa_\e(x)$ is bounded, we can apply inequality \fer{ccc} to conclude that the square of the Hellinger distance between $f(t)$ and the equilibrium density $e$ is integrable, with
  \be\label{vv}
 \int_0^\infty d_H^2(f(t)|e)\, dt \le 18M H(f_0|e).
 \ee
Coupling \fer{vv} with the monotonicity in time of $d_H^2(f(t)|e)$, we conclude that this distance converges to $0$ 
 at a rate of order at least $1/t$. This implies convergence in $L^1$ towards equilibrium of the solution of the Fokker--Planck equation \fer{FP} at the same rate \cite{FPTT}. 
 
 Since the Gaussian density \fer{max} is the steady state of equation \fer{FP} with a uniformly bounded diffusion coefficient $\kappa_\e(x)$, the previous convergence result holds in this case. 
 
The uniform  boundedness of the diffusion coefficient can be easily concluded even if the equilibrium density is of type \fer{eq-1}, that is when the confinement potential $P_\e(x)$ is even and strongly convex. In this case, if $x>0$, and $y >x$, expanding in Taylor series up to the order two we get
 \[
 P_\e(y) - P_\e(x) = \frac {d}{dx}P_\e(x)(y-x) + \frac 12 \frac {d^2}{dx^2}P_\e(\bar x)(y-x)^2 \ge  \frac 12\rho\,(y-x)^2 .
 \]
 Then, if $x>0$ formula \fer{peso} gives
 \[
 \kappa(x) = \frac{\int_x^{+\infty} \tilde e(y)\, dy}{\tilde e(x)} =\int_x^{+\infty} e^{-(P_\e(y) - P_\e(x))}\, dy \le  \int_x^{+\infty} e^{ -\frac 12 \rho(y-x)^2}\, dy.
 \]
The integral on the right-hand side can be evaluated by substitution, setting $z = y-x$, to give
  \be\label{ine-conv}
  \kappa(x) \le \int_0^{+\infty} e^{ -\frac{1}2 \rho z^2}\, dz = \sqrt{\frac \pi{2\rho}}. 
 \ee
The same bound when $x<0$. Note however that this bound is lost as soon as $\rho \to 0$, namely when the potential $P_\e(x)$ is not uniformly convex. 

Hence, for any equilibrium density of type \fer{eq-1} we conclude with convergence in $L^1$ towards equilibrium of the solution of the Fokker--Planck equation \fer{FP} at at a rate of order at least $1/t$. We can collect the previous results into the following 
 
 \begin{thm} Let $f(x,t)$ be the unique solution to the initial value problem for the Fokker--Planck equation \fer{FP}, with a diffusion coefficient $\kappa_\e\le M < +\infty$, and let $\tilde g(x)$ denote the corresponding equilibrium density.  Then, if the initial density  $f_0$ is such that the relative entropy is bounded, the solution $f(x,t)$ converges to equilibrium in Hellinger distance, and
\be\label{dec-b}
\lim_{t\to\infty} \frac{d_H(f(t),\tilde g)}{ t ^{1/2}} = 0.
\ee
 \end{thm}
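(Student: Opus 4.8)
The plan is to read off the decay of the Hellinger distance from two facts already at our disposal: the time-integrability of $d_H^2(f(t),\tilde g)$ along the flow, and the monotonicity in time of this same quantity. Writing $e=\tilde g$ for the equilibrium, the first comes from pairing the entropy--dissipation identity with the weighted Wirtinger inequality, and the second is the statement that the Hellinger distance is non-increasing. Once both are in hand, the rate \fer{dec-b} follows from an elementary lemma: a non-negative, non-increasing, integrable function must decay faster than $1/t$.

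First I would record the integrability, which the preceding computation already delivers. Pairing the Wirtinger-type bound \fer{ccc}, valid precisely because $\kappa_\e\le M$, with the entropy--dissipation identity \fer{dec-ent} integrated on $[0,\infty)$ gives \fer{vv}, that is $\int_0^\infty d_H^2(f(t),e)\,dt\le 18M\,H(f_0|e)<\infty$; here finiteness of the right-hand side is exactly the hypothesis $H(f_0|e)<\infty$. Hence $h(t):=d_H^2(f(t),e)$ is a non-negative integrable function on $\R^+$.

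The second ingredient is monotonicity. Identity \fer{sqr} reads $\tfrac{d}{dt}d_H^2(f(t),e)=-I_H(f(t),e)\le 0$, since the Hellinger entropy production is non-negative, so $h$ is non-increasing. I would then invoke the elementary lemma: if $h\ge0$ is non-increasing with $\int_0^\infty h<\infty$, then $t\,h(t)\to0$. This is immediate from the chain $\tfrac{t}{2}\,h(t)\le\int_{t/2}^{t}h(s)\,ds$, whose right-hand side is a tail of a convergent integral and therefore vanishes as $t\to\infty$. Applying it to $h=d_H^2(f(\cdot),e)$ yields $t\,d_H^2(f(t),e)\to0$, equivalently $t^{1/2}d_H(f(t),e)\to0$, which is the asserted convergence \fer{dec-b}.

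I expect the genuinely delicate point to be not the algebra above but the rigorous justification of the dissipation identities \fer{dec-ent} and \fer{sqr}, and hence of the monotonicity of $h$, at the level of regularity granted by the well-posedness theory of \cite{LL1}. Differentiating $H(f(t)|e)$ and $d_H^2(f(t),e)$ in time and integrating by parts requires enough integrability and decay of $f(x,t)$ and of $\partial_x\log(f/e)$ to discard the boundary contributions at $x=\pm\infty$ and to keep the entropy-production integrals finite; this is where the a priori bounds of \cite{FPTT} enter, and I would take the identities as established there. Granting them, the remaining steps are routine, the only auxiliary remark being that $H(f_0|e)<\infty$ propagates to every $t>0$ by monotonicity of the relative entropy, so all quantities above are finite.
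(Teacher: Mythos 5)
Your proposal is correct and follows essentially the same route as the paper: the time-integrability of $d_H^2(f(t),e)$ obtained by combining the Wirtinger-based bound \fer{ccc} with the integrated entropy--dissipation identity \fer{dec-ent} (i.e.\ \fer{vv}), coupled with the monotonicity of the Hellinger distance from \fer{sqr}, yields $t\,d_H^2(f(t),e)\to 0$. The only addition is that you make explicit the elementary lemma (a non-negative, non-increasing, integrable function $h$ satisfies $t\,h(t)\to 0$, via $\tfrac t2 h(t)\le\int_{t/2}^t h(s)\,ds$) which the paper leaves implicit in the phrase ``coupling \fer{vv} with the monotonicity in time.''
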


 \subsection{\emph{Unbounded coefficients of diffusion}}\label{sec:unbounded}
 
 In the case of a generalized Gaussian density, the coefficient of diffusion is not uniformly bounded, and we can not apply directly the method developed in Section \ref{sec:bounded}. However, since for any given positive constant $R$ the coefficient of diffusion is bounded in the interval $(-R,R)$, we can apply the reasoning of Section \ref{sec:bounded} to obtain the rate of decay towards equilibrium  of the solution $f_R(x,t)$ to the initial-boundary value problem for the Fokker--Planck type equation \fer{FP} with initial value
 \be\label{new-iv}
 f_{0,R}(x) = \frac{f_0(x)}{\int_{|x| \le R} f_0(y)\,dy}, \quad {\rm{if}} \,\, |x| < R, \quad f_{0,R}(x) = 0,  \quad {\rm{if}} \,\, |x| \ge R
  \ee
 and no-flux boundary conditions
 \be\label{no-flux}
 \left.\frac{\partial }{\partial x}
 \left( \kappa_\e(x) f_R(x,t)\right) +  
 \Theta_\e(x) f_R(x,t) \right|_{x= \pm R} = 0.
 \ee
Note that, since the boundary conditions \fer{no-flux} imply mass conservation, and the initial value is a probability density on the interval $(-R, R)$, the steady state in this case is the generalized Gaussian-type density
 \be\label{gen-R}
 g_{\beta,R}(x) = \frac{g_\beta(x)}{\int_{|x| \le R} g_\beta(y)\,dy}, \quad {\rm{if}} \,\, |x| < R, \quad g_{\beta,R}(x) = 0,  \quad {\rm{if}} \,\, |x| \ge R,
  \ee
 where $g_\beta$ is defined by \fer{gen}.
 
 Let us first suppose that the relative entropy $H(f_R(t)|g_{\beta,R})$ is bounded at time $t=0$. Then,  proceeding as in the derivation of inequality \fer{ccc}  we now obtain
 \begin{equations}\label{ccd}
& d_H^2(f_R(t),g_{\beta,R}) \le 18\int_\R \kappa^2(x) g_{\beta,R}(x)  \left|\frac{\partial }{\partial x}\sqrt{ \frac{f_R(x,t)}{g_{\beta,R}}} \right|^2 \, dx \le \\
&\qquad\qquad  18\gamma_\beta \sqrt{1+R^2} \int_\R \kappa(x) g_{\beta,R}(x)  \left|\frac{\partial }{\partial x}\sqrt{ \frac{f_R(x,t)}{g_{\beta,R}(x)}} \right|^2 \, dx \le  \\
&\qquad\qquad 18\gamma_\beta(1+R) I(f_R(t)|g_{\beta,R}).
 \end{equations}
Hence,  the solution to the initial value problem in $(-R,R)$, at any given time $t >0$ satisfies the bound 
  \be\label{b-int}
  \begin{split}
  &d_H(f_R(t),g_{\beta,R}) \le \left(18\gamma_\beta(1+R) I(f(t)|g_{\beta,R})\right)^{1/2} \le \\
  &\qquad\qquad\qquad\qquad\qquad 3\sqrt{2 \gamma_\beta} (1 + \sqrt R) \sqrt{I(f_R(t)|g_{\beta,R})}.
  \end{split}
  \ee
 At this point, to obtain the time decay of the Hellinger distance $ d_H(f(t),g_\beta)$ of the original problem on $\R$, we can resort to the triangle inequality 
  \be\label{tria}
   d_H(f(t),g_\beta) \le d_H(f(t),f_R(t)) + d_H(f_R(t),g_{\beta,R})+  d_H(g_{\beta,R}, g_\beta) 
  \ee
 It is immediate to show that, owing to the definition of $f_R(t)$,  we have the equality
  \be\label{b1}
  \int_\R |f_0(y) -f_{0,R}(y)| \, dy = 2 \int_{|x|\ge R} f_0(y) \, dy,
  \ee
 Likewise
  \be\label{b2}
  \int_\R |g_\beta(y) -g_{\beta,R}(y)| \, dy = 2 \int_{|x|\ge R} g_\beta(y) \, dy.
  \ee
  Also, for any positive constant $\alpha <\beta -1/2$ we have
  \[
  \int_{|x|\ge R} g_\beta(y) \, dy \le \frac 1{R^{2\alpha} }\int_{|x|\ge R}  x^{2\alpha} g_\beta(y) \, dy \le  \frac 1{R^{2\alpha} }M_{\alpha,\beta}^2 < +\infty,
  \]
 and, consequently
  \be\label{bb1}
  d_H(g_{\beta,R}, g_\beta) \le \left(\int_\R |g_\beta(y) -g_{\beta,R}(y)| \, dy\right)^{1/2} \le \sqrt 2 \frac 1{R^{\alpha} }M_{\alpha,\beta}.
  \ee
Last, we look for an upper bound  for the first term in inequality \fer{tria}. To this extent, we recall that, for $1/2 \le p \le 1$, the functional 
 \be\label{dp}
 d_p(f(t), g_\beta) = \int_\R \left[ \left(\frac{f(y,t)}{g_\beta(y)}\right)^p- 1\right]^2 g_\beta(y)\, dy
 \ee
is non-increasing in view of the convexity of the function $p(x) = (x^p-1)^2$, $x \in \R_+$ \cite{FPTT}. Note that the case $p=1/2$ coincides with the square of the Hellinger distance considered in Section \ref{sec:bounded}. Therefore
\be\label{dp-time}
d_p(f(t), g_\beta) \le d_p(f_0, g_\beta).
\ee
Let $C_\beta$ be defined by \fer{mass1}, and let
\be\label{buono}
 M_{p,\beta}(f(t)) = \frac 1{C_\beta} \int_\R (1+y^2)^{\beta(2p-1)} f^{2p}(y,t) \,dy.
\ee
Expanding the square in \fer{dp},  we obtain 
 \[
d_p(f(t), g_\beta)   = M_{p,\beta}(f(t)) +1 - 2\int_\R f(y,t)^p g_\beta(y)^{1-p}\,dy.
 \]
Recalling that $f(x,t)$ and $g_\beta(x)$ are probability densities, H\"older inequality implies 
 \[
 \int_\R f(y,t)^p g_\beta(y)^{1-p}\,dy \le 1.
 \]
 Hence, we conclude with the following chain of inequalities
 \be\label{chain}
M_{p,\beta}(f(t)) -1\le d_p(f(t), g_\beta)   \le  M_{p,\beta}(f(t)) +1.
 \ee
 Therefore, if the initial value $f_0$ is such that, for some $1/2 <p \le 1$, $M_{p,\beta}(f_0)$ is bounded, $M_{p,\beta}(f(t))$ is bounded for any subsequent time $t>0$, and, thanks to \fer{chain}
 \be\label{okk}
 M_{p,\beta}(f(t)) \le M_{p,\beta}(f_0) + 2.
 \ee
 The next step is to show that,  when $\beta >1$,  inequality \fer{okk} allows to prove that some moments of the solution $f(x,t)$ remain uniformly bounded in time. Indeed, if for $1/2 < p \le 1$ we consider
 \be\label{con4}
  \alpha \le \frac\beta{2} \frac{2p-1}{2p}
 \ee
the H\"older inequality implies
    \begin{equations}\label{last}
& \int_\R (1+y^2)^\alpha f(y,t) \, dy \le \int_\R (1+y^2)^{\frac\beta{2}[1-1/(2p)]} f(y,t) \, dy \le \\
& \int_\R (1+y^2)^{\beta[1-1/(2p)]}\frac 1{(1+y^2)^{{\frac\beta{2}[1-1/(2p)]}} }f(y,t) \, dy \le \\
&\left[ \int_\R (1+y^2)^{\beta(2p -1)}f^{2p}(y,t) \, dy\right]^{1/(2p)}\left[ \int_\R \frac 1{(1+y^2)^{\beta/2} } \, dy\right]^{(2p-1)/(2p)} = \\ 
&M_{p,\beta}(f(t)) C_{\beta/2}^{-(2p-1)/(2p)} \le \left(M_{p,\beta}(f_0) + 2\right)C_{\beta/2}^{-(2p-1)/(2p)}. 
 \end{equations}
Therefore, if \fer{con4} holds, the moment of order $2\alpha$ of the solution $f(t)$ of the Fokker--Planck equation \fer{FP} with a steady state in the form of the generalized Gaussian density $g_\beta$, $\beta >1$, defined in \fer{gen} remains uniformly bounded in time, and, proceeding as in \fer{bb1} we obtain
  \be\label{bb2}
  d_H(f_{R}(t), f(t) ) \le \sqrt 2 \frac 1{R^{\alpha} }\sqrt{\left(M_{p,\beta}(f_0) + 2\right)C_{\beta/2}^{(2p-1)/(2p)}} = \sqrt 2 \frac 1{R^{\alpha} }D_{\beta,p}(f_0). 
  \ee
Finally, consider that, given  $\beta >1$, and $1/2 < p \le 1$, it holds
 \be\label{vz}
 \frac\beta{2} \frac{2p-1}{2p} \le  \beta - \frac12.
 \ee 
Thanks to the estimates \fer{b-int}, \fer{bb1} and \fer{bb2}, we conclude from \fer{tria} that, provided $M_{p,\beta}(f_0)$ is bounded for $\beta >1$ and $1/2< p \le 1$, for any $\alpha$ satisfying inequality \fer{con4} one has 
 \be\label{s-R}
 d_H(f(t),g_\beta) \le  \frac 1{R^{\alpha} } \tilde D_{\beta,p}(f_0) + 3\sqrt{2\gamma_\beta}(1 + \sqrt R) \sqrt{I(f_R(t)|g_{\beta,R})},
 \ee
with obvious meaning of the constant  $ \tilde D_{\alpha,\beta,p}(f_0)$. If we optimize over $R\ge 0$ we can easily show that the function
 \[
 z(R) = \frac A{R^\alpha} + B(1 + \sqrt R)
 \]
satisfies the inequality
 \[
 z(R) \ge B + C_\alpha A^{1/(2\alpha+1)} B^{2\alpha/(2\alpha +1)},
 \]
where the constant is explicitly given by 
 \[
 C_\alpha = (2\alpha)^{1/(2\alpha+1)} + \left(\frac 1\alpha\right)^{2\alpha/(2\alpha +1)}.
 \] 
 Finally, since the entropy production term $I(f_R(t)|g_{\beta,R})$ is infinitesimal at least of order $1/t$, the solution to the Fokker--Planck equation converges towards the generalized Gaussian density at least at the order $1/ t^\frac{\alpha}{2\alpha +1}$. 
 
 Note that this result shows that the rate of decay towards equilibrium in Hellinger distance, and consequently in $L^1(\R)$ is heavily dependent on $\beta$, namely from the number of bounded moments of the generalized Gaussian density. Higher the number of moments is, higher the rate of convergence. Note that the result is consistent with the decay found in the Gaussian case, since for $\beta \to +\infty$ we can choose $\alpha \to +\infty$, thus obtaining the decay of the square of the Hellinger distance at the same rate found in Section \ref{sec:bounded}. We can collect the previous results into the following 
 
 \begin{thm}\label{unb} Let $f(x,t)$ be the unique solution to the initial value problem for the Fokker--Planck equation \fer{FP}, with a diffusion coefficient $\kappa_\e$ that gives  the generalized Gaussian density $g_\beta(x)$ defined in \fer{gen} as equilibrium density.  Let $\beta >1$, and let $\alpha$ satisfy \fer{con4}. Then, if the initial density  $f_0$ is such that the relative entropy is bounded, and, for some positive constant $1/2 < p \le 1$ the integral
 \be\label{buono}
 M_{p,\beta}(f_0) = \frac 1{C_\beta} \int_\R (1+y^2)^{\beta(2p-1)} f_0^{2p}(y) \,dy \le M< +\infty,
\ee
the solution $f(x,t)$ converges to equilibrium in Hellinger distance, and
\be\label{dec-u}
\lim_{t\to\infty} \frac{d_H(f(t),g_\beta)}{ t^{\alpha/(2\alpha +1)}} = 0.
\ee
 \end{thm}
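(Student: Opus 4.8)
The plan is to circumvent the unboundedness of the diffusion coefficient by localizing the problem to a bounded interval $(-R,R)$, where the machinery of Section~\ref{sec:bounded} applies verbatim, and then to transfer the resulting decay back to the whole line through a triangle inequality, optimizing over the cut-off $R$ at the end. Concretely, I would compare the solution $f(\cdot,t)$ on $\R$ with the solution $f_R(\cdot,t)$ of the truncated initial-boundary value problem \fer{new-iv}--\fer{no-flux}, whose steady state is the renormalized density $g_{\beta,R}$ of \fer{gen-R}, and split the error as in \fer{tria},
\[
d_H(f(t),g_\beta) \le d_H(f(t),f_R(t)) + d_H(f_R(t),g_{\beta,R}) + d_H(g_{\beta,R},g_\beta).
\]
The three terms are then estimated separately, the first being the delicate one.

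For the middle term the point is that on $(-R,R)$ one has the uniform bound $\kappa(x) \le \gamma_\beta\sqrt{1+R^2}$ from \fer{ge}, so the Wirtinger-type inequality \fer{Wi-gen} combined with Proposition~\ref{prop:1} reproduces the chain \fer{ccc} in the localized form \fer{ccd}, namely $d_H^2(f_R(t),g_{\beta,R}) \le 18\gamma_\beta(1+R)\,I(f_R(t)|g_{\beta,R})$. Since the relative entropy is non-increasing and its production is integrable, $\int_0^\infty I(f_R(t)|g_{\beta,R})\,dt \le H(f_{0,R}|g_{\beta,R}) < \infty$, and coupling this with the monotonicity of $d_H^2$ yields $t\,d_H^2(f_R(t),g_{\beta,R}) \le 18\gamma_\beta(1+R)H(f_{0,R}|g_{\beta,R})$, i.e.\ a decay of order $\sqrt{(1+R)/t}$, as recorded in \fer{b-int}. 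The right-most term is a pure tail estimate: bounding $\int_{|x|\ge R} g_\beta$ by an $\alpha$-moment of $g_\beta$ gives $d_H(g_{\beta,R},g_\beta) \le \sqrt 2\,R^{-\alpha}M_{\alpha,\beta}$ as in \fer{bb1}, which converges precisely because $\alpha < \beta - 1/2$.

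The main obstacle is the first term $d_H(f(t),f_R(t))$, because it demands control of the \emph{genuine} solution on all of $\R$, uniformly in $t$. Here I would exploit the family of monotone functionals $d_p$ of \fer{dp}: since $x \mapsto (x^p-1)^2$ is convex for $1/2 \le p \le 1$, $d_p(f(t),g_\beta)$ is non-increasing along the flow, which through the chain \fer{chain} produces the uniform-in-time bound $M_{p,\beta}(f(t)) \le M_{p,\beta}(f_0)+2$ of \fer{okk}. A H\"older interpolation as in \fer{last} then upgrades this weighted $L^{2p}$ control into a uniform bound on the polynomial moment $\int_\R (1+y^2)^\alpha f(y,t)\,dy$, provided $\alpha$ obeys \fer{con4}; the hypothesis $\beta>1$ is exactly what renders the companion integral $C_{\beta/2}^{-1} = \int_\R (1+y^2)^{-\beta/2}\,dy$ finite, while the consistency inequality \fer{vz} guarantees the admissible range of $\alpha$ is non-empty. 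With the moment bounded, the same tail argument as for $g_\beta$ yields $d_H(f_R(t),f(t)) \le \sqrt 2\,R^{-\alpha}D_{\beta,p}(f_0)$, i.e.\ \fer{bb2}. This propagation of moments uniformly in time is the crux: without it the truncation error could not be made small independently of $t$, and the whole scheme would break down.

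Assembling the three estimates gives \fer{s-R},
\[
d_H(f(t),g_\beta) \le R^{-\alpha}\tilde D_{\beta,p}(f_0) + 3\sqrt{2\gamma_\beta}\,(1+\sqrt R)\,\sqrt{I(f_R(t)|g_{\beta,R})},
\]
after which I would optimize over $R \ge 0$. Writing the right-hand side as $z(R)=A R^{-\alpha}+B(1+\sqrt R)$ and balancing the competing contributions, $A R^{-\alpha}\sim B\sqrt R$, forces $R \sim (A/B)^{2/(2\alpha+1)}$ and gives $z \ge B + C_\alpha A^{1/(2\alpha+1)}B^{2\alpha/(2\alpha+1)}$. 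As established above the entropy-production factor is infinitesimal of order at least $1/t$, so that $B$ is of order $t^{-1/2}$, and the dominant term is then of order $t^{-\alpha/(2\alpha+1)}$, which is exactly \fer{dec-u}; the monotonicity of $d_H$ upgrades the bound to the stated limit. As a consistency check, letting $\beta\to\infty$ permits $\alpha\to\infty$ and recovers the Gaussian rate $t^{-1/2}$ of Section~\ref{sec:bounded}.
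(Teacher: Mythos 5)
Your proposal is correct and follows essentially the same route as the paper: the same truncation to $(-R,R)$ with no-flux boundary conditions, the same triangle inequality \fer{tria}, the same use of the monotone functionals $d_p$ to propagate moments uniformly in time via \fer{okk} and \fer{last}, and the same optimization over $R$ yielding the rate $t^{-\alpha/(2\alpha+1)}$. Nothing essential is missing or different.
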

 
 \begin{rem} Theorem \ref{unb} shows that convergence in Hellinger distance can be proven for generalized Gaussian densities $g_\beta$, with $\beta >1$. For generalized Gamma densities with $1/2 < \beta \le 1$ no rates of convergence can be obtained by the previous method. Hence, the finding of a rate of convergence in this range of the parameter $\beta$ remains an open problem. As we shall see, the numerical simulation of entropy decay suggests that a certain rate of decay still continues to hold.
  \end{rem}
 
 \section{Numerical results}\label{sec:numerics}
 
 In this Section we investigate numerically the trends to equilibrium of the Fokker-Planck equation \eqref{FP}. We  focus on a class of numerical schemes for Fokker-Planck equations that preserves structural properties, like non negativity of the solution, entropy dissipation and correct large-time behavior. These methods have been recently developed in \cite{PZ} and are based on the works on the classical Fokker-Planck equation \cite{Buet,CC, LLPS} (see also \cite{PZ2} for applications of the scheme to relevant models for collective phenomena). We will refer to these numerical schemes as structure preserving schemes (SP). 
 
 To be self-consistent, we summarize  the main features of SP methods.  We rewrite \eqref{FP} in flux form as follows
 \[
 \dfrac{\partial}{\partial t} f(x,t) = \dfrac{\partial}{\partial x} \mathcal F[f](x,t), 
 \]
 where 
 \[
 \mathcal F[f](x,t) = \Theta_\epsilon(x) f(x,t) + \kappa_\epsilon(x) \dfrac{\partial}{\partial x} f(x,t). 
 \]
Then,  we introduce a uniform grid $\{x_i\}_{i=1}^N$ with $\Delta x = x_{i+1}-x_i>0$ constant, we denote $x_{i+1/2} = x_i + \Delta x/2$, and we consider the conservative discretization
 \begin{equation}
 \label{eq:explicit}
 \dfrac{d}{dt} f_i(t) = \dfrac{\mathcal F_{i+1/2} - \mathcal F_{i-1/2}}{\Delta x}, \qquad i = 1,\dots,N,
 \end{equation}
being $f_i(t) = \/\Delta x \int_{x_{i-1/2}}^{x_{i+1/2}} f(x,t)dx$ the numerical approximation of the cell average. As described in \cite{PZ} we may chose a numerical flux of the form 
\begin{equation}
\label{eq:Fi12}
\mathcal F_{i+1/2}[f] = \mathcal C_{i+1/2} \tilde{f}_{i+1/2} + (\kappa_\epsilon)_i \dfrac{f_{i+1}-f_i}{\Delta x}, 
\end{equation}
where $\tilde f_{i+1/2}$ is a convex combination of the values of $f$ in two adjacent cells of the form
\[
\tilde f_{i+1/2} = (1-\delta_{i+1/2})f_{i+1}  + \delta_{i+1/2}f_i. 
\]
Hence, the definition of $\mathcal C_{i+1/2}$ and $\delta_{i+1/2}$ can be obtained equating the numerical and analytical equilibrium conditions, i.e. respectively 
\[
\dfrac{f_{i+1}}{f_i} = \dfrac{-\delta_{i+1/2}\mathcal C_{i+1/2} + (\kappa_\epsilon)_{i+1/2}/\Delta x}{(1-\delta_{i+1/2})\mathcal C_{i+1/2} + (\kappa_\epsilon)_{i+1/2}/\Delta x}, 
\]
and 
\[
\dfrac{f(x_{i+1},t)}{f(x_i,t)} = \exp\left\{ \int_{x_i}^{x_{i+1}}\dfrac{\Theta_\epsilon(y) + \kappa_\epsilon^\prime(y)}{\kappa_\epsilon(y)}dy \right\},
\]
where  $\kappa_\epsilon^\prime(x)$ is the first derivate with respect to $x$ of the function $\kappa_\epsilon$. 
Setting 
\begin{equation}
\label{eq:C}
\mathcal C_{i+1/2}(x) = \dfrac{(\kappa_\epsilon)_{i+1/2}}{\Delta x} \int_{x_i}^{x_{i+1}}\dfrac{\Theta_\epsilon(y) + \kappa_\epsilon^\prime(y)}{\kappa_\epsilon(y)}dy,
\end{equation}
we obtain
\begin{equation}
\label{eq:delta}
\delta_{i+1/2} = \dfrac{1}{\lambda_{i+1/2}} + \dfrac{1}{1-\exp(\lambda_{i+1/2})}, 
\end{equation}
where 
\begin{equation}\label{eq:lambda}
\lambda_{i+1/2} = \int_{x_i}^{x_{i+1}} \dfrac{\Theta_\epsilon(y) + \kappa_\epsilon^\prime(y)}{\kappa_\epsilon(y)}dy = \dfrac{\Delta x \mathcal C_{i+1/2}}{(\kappa_\epsilon)_{i+1/2}}. 
\end{equation}
This SP scheme applied to the evolution of the solution to the Fokker-Planck equation \fer{FP} offers several advantages.
\begin{itemize}
\item First, non-negativity of the numerical solution, without restrictions on $\Delta x$, may be proven under suitable CFL restrictions both for strong stability preserving (SSP) integration methods and for high order implicit schemes. In particular, for SSP methods we may prove that under the parabolic time step restriction 
\[
\Delta t \le \dfrac{\Delta x^2}{2(M\Delta x + D)}, \qquad M = \max_i|\mathcal C_{i+1/2}|, \qquad D = \max_i (\kappa_\epsilon)_{i+1/2},
\]
the explicit scheme for \eqref{eq:explicit} preserves non negativity. Furthermore we may prove that under a more mild time step restriction 
\[
\Delta t< \dfrac{\Delta x}{2M},\qquad M = \max_i |\mathcal C_{i+1/2}|, 
\]
the implicit scheme for \eqref{eq:explicit} preserves non negativity of the numerical solution. 
\item Second, the large time numerical solutions approximate the exact steady state $e(x)$ with arbitrary accuracy in connection with high-order quadrature rules considered to compute \eqref{eq:C}. For linear problems, the scheme preserves the steady state exactly by choosing
\begin{equation}\label{eq:delta_exact}
\delta_{i+1/2} = \dfrac{1}{\log(e_i)-\log(e_{i+1})} + \dfrac{e_{i+1}}{e_{i+1} - e_i}. 
\end{equation}
\item Last, for linear drift functions as in \eqref{FP} the numerical flux \eqref{eq:Fi12} with $\mathcal C_{i+1/2}$ and $\delta_{i+1/2}$ defined in \eqref{eq:C}-\eqref{eq:delta} satisfies the discrete entropy dissipation 
\[
\dfrac{d}{dt} \mathcal H_{\Delta }(f(t)|e) =- \mathcal I_{\Delta}(f(t)|e),
\]
where 
\begin{equation}
\label{eq:nentropy}
\mathcal H_\Delta(f(t)|e) = \Delta x \sum_{i=0}^N f_i \log\left(\dfrac{f_i}{e_i} \right),
\end{equation}
and $\mathcal I_\Delta$ is the positive discrete dissipation function
\[
\mathcal I_\Delta(f(t)|e)= \sum_{i=0}^N \left[ \log\left(\dfrac{f_{i+1}}{e_{i+1}} \right) - \log\left( \dfrac{f_i}{e_i} \right) \right] \left( \dfrac{f_{i+1}}{e_{i+1}}- \dfrac{f_i}{e_i} \right)\hat e_i (\kappa_\epsilon)_{i+1/2} \ge 0,
\]
with 
\[
\hat e_i = \dfrac{e_{i+1}e_i}{e_{i+1}-e_i} \log\left( \dfrac{e_{i+1}}{e_i} \right).
\]
\end{itemize}

It is worth to remark that a suitable extension of the introduced class of schemes, called structure preserving entropy average (SP-EA) methods, has been developed to tackle gradient-flow-type equations. We point the interested reader to \cite{PZ} where all the details of this second formulation, which is equivalent for large times to the introduced one. In this case, SP-EA methods are capable to dissipate the free energy for these problems. 

In the numerical simulations that follow, we consider exact integration in the Gaussian case of Section \ref{sect:gauss} and, in the generalized Gaussian case of Section \ref{sect:gen_gauss}, either open Newton-Cotes formulas up to order 6 or Gauss-Legendre quadrature with $10$ points in each computational cell. We will adopt the notation $SP_k$, $k = 2,4,6,G$ to denote the introduced structure preserving schemes where \eqref{eq:lambda} is approximated with second, fourth, six order Newton-Cotes quadrature of Gauss-Legendre quadrature, respectively. 

\subsection{\emph{Test 1. The Gaussian equilibrium}}
Let us consider the time evolution of the density function $f(x,t)$ described by \eqref{FP} with drift $\Theta_\epsilon(x)$ defined in \eqref{tangh} and non constant diffusion \eqref{ke-gauss}. As shown in Section \ref{sec:bounded}, the corresponding stationary distribution is the Gaussian density \fer{max}.

We consider as initial distribution 
\begin{equation}
\label{eq:f0_g}
f(x,0) = \nu \left[e^{-c(x-1)^2}+e^{-c(x+1)^2} \right],
\end{equation}
with $c = \frac{5}{2}$ and $\nu>0$ a normalization constant. We employ in this case the weights defined in \eqref{eq:delta_exact} which exploit the knowledge of asymptotic distribution \eqref{max}. 
In Figure \ref{fig:1} we report the comparison between the analytic and numerical solution obtained in the computational domain $[-L,L]$, $L=5$, and discretized with $N = 101$ gridpoints. We considered as initial distribution \eqref{eq:f0_g} and we computed the solution up to time $T = 40$ with $\Delta t = \Delta x^2/L$ through RK4 numerical integration. Furthermore, in the left plot, we report the evolution of the relative $L^1$ error which is defined as 
\[
err(t^n) = \sum_{i=0}^N \dfrac{|f_i^n - e(x_i)|}{e(x_i)},
\]
being $e(x_i)$ the analytical solution \eqref{max} computed in the gridpoint $x_i$. We can easily observe how we reach machine precision in finite time. 

\begin{figure}
\centering
\includegraphics[scale = 0.28]{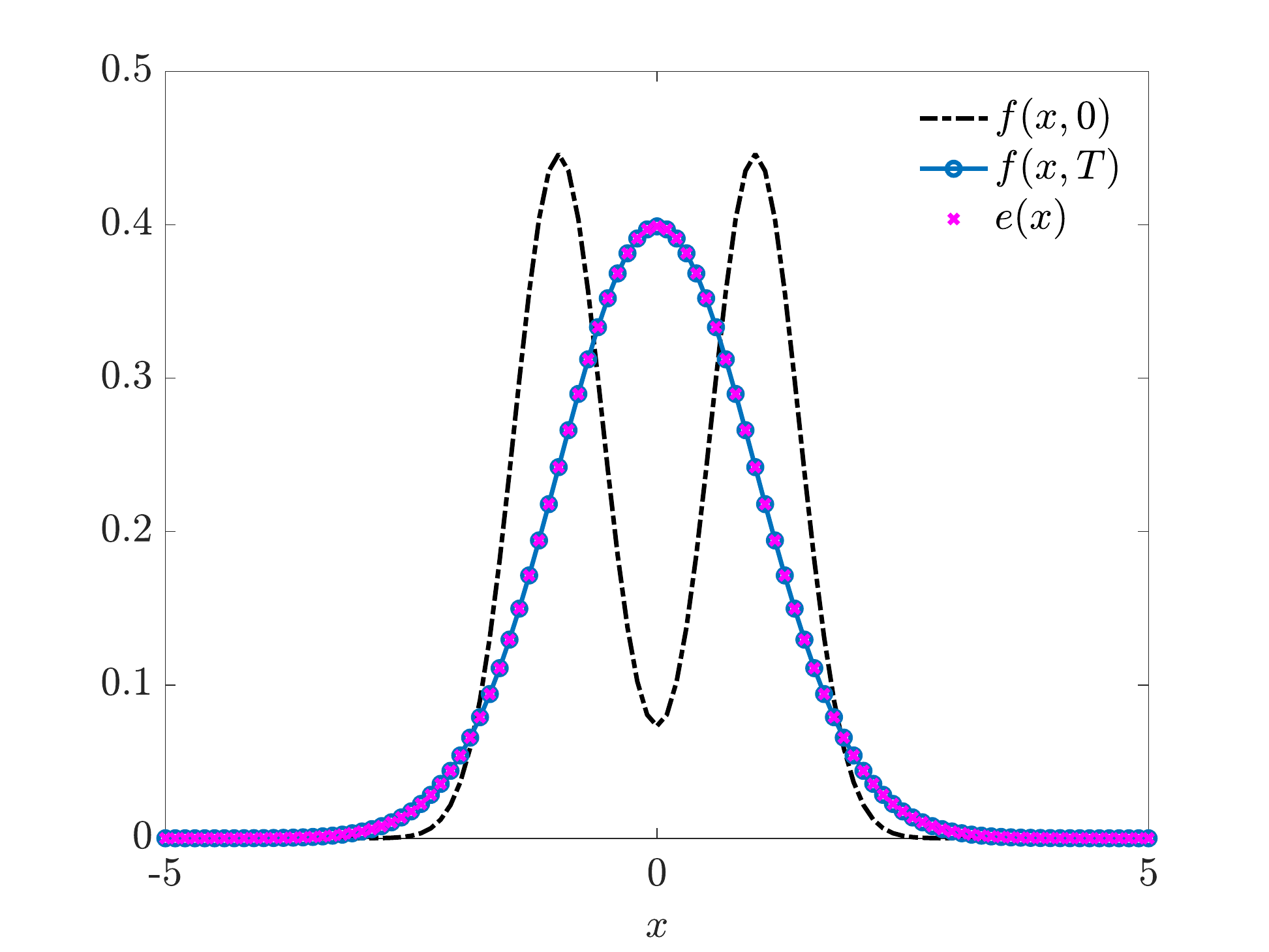}
\includegraphics[scale = 0.28]{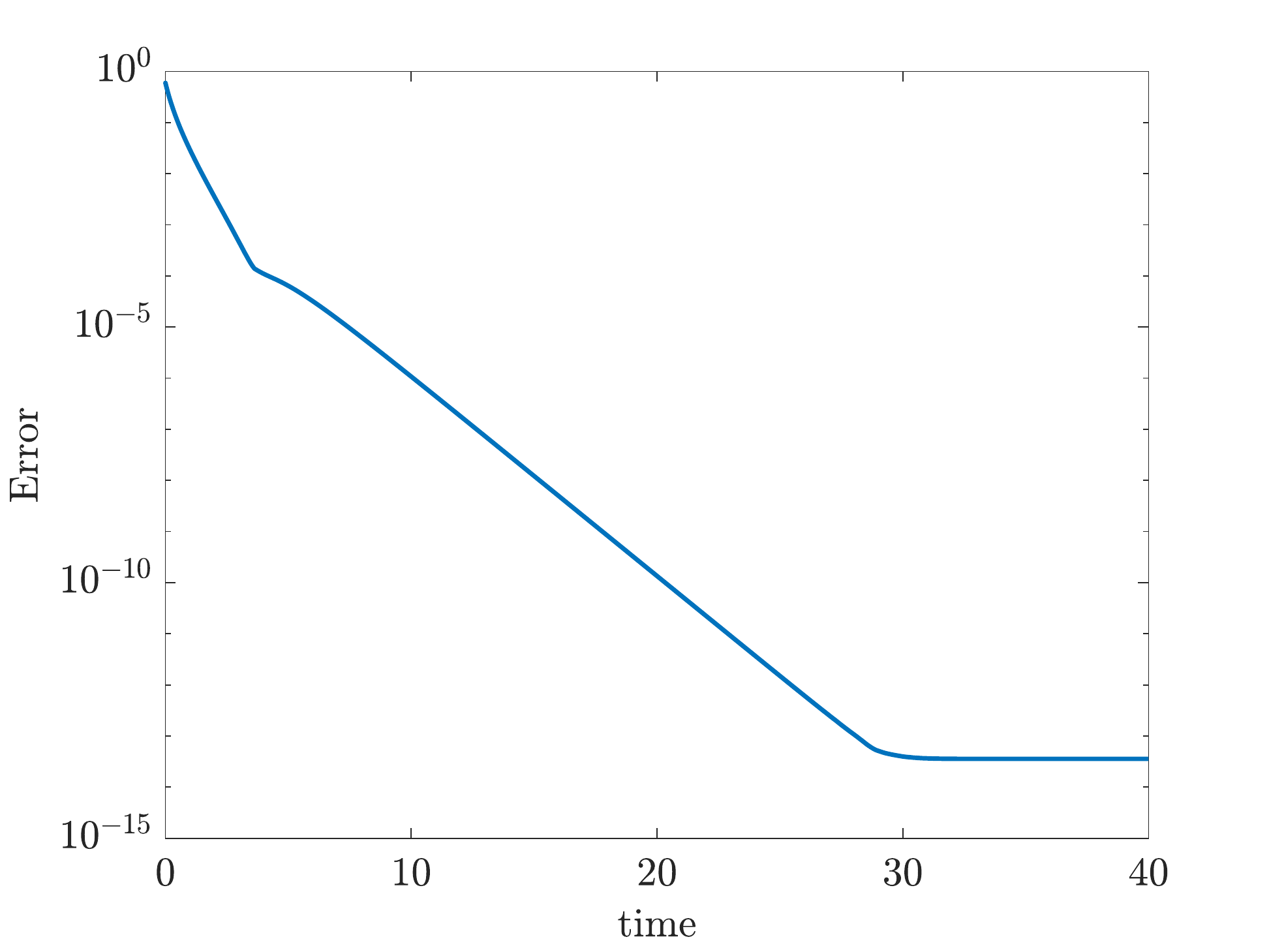}
\caption{\textbf{Test 1}. Left: evolution of the density $f(x,t)$ with initial condition $f(x,0)$ defined in \eqref{eq:f0_g}, black dashed line. In magenta we report the analytical steady state solution \eqref{max}. Right: evolution of the relative $L^1$ error for the explicit scheme. The computational domain is $[-L,L]$, $L=5$ discretized by $N = 101$ gridpoints and timestep $\Delta t = \Delta x^2/L$. We considered as final time $T = 40$.  }
\label{fig:1}
\end{figure}

The evolution of the relative entropy is considered in Figure \ref{fig:entropy}. In the left plot we show how the SP scheme dissipates the numerical entropy \eqref{eq:nentropy} in the case of three grids obtained in the interval $[-5,5]$ with $N = 21,41,81$ gridpoints. Furthermore, in the right plot we compare the trends of the numerical entropy with the evolution of relevant time functions. The parameters $c_1,c_2>0$ are such that these functions assume the same value of $\mathcal H_\Delta$ at time $t = 0$. 

\begin{figure}
\centering
\includegraphics[scale = 0.28]{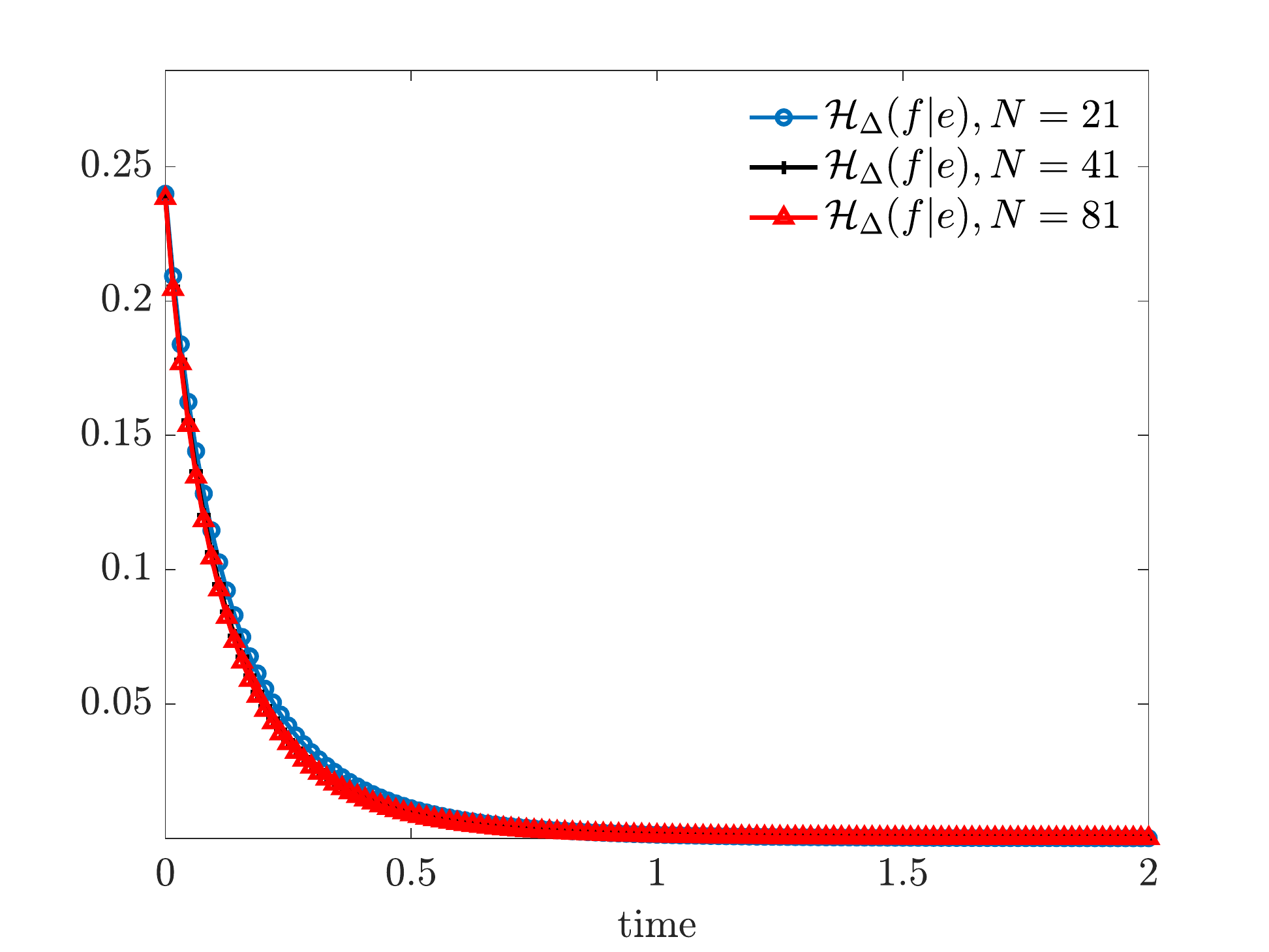}
\includegraphics[scale = 0.28]{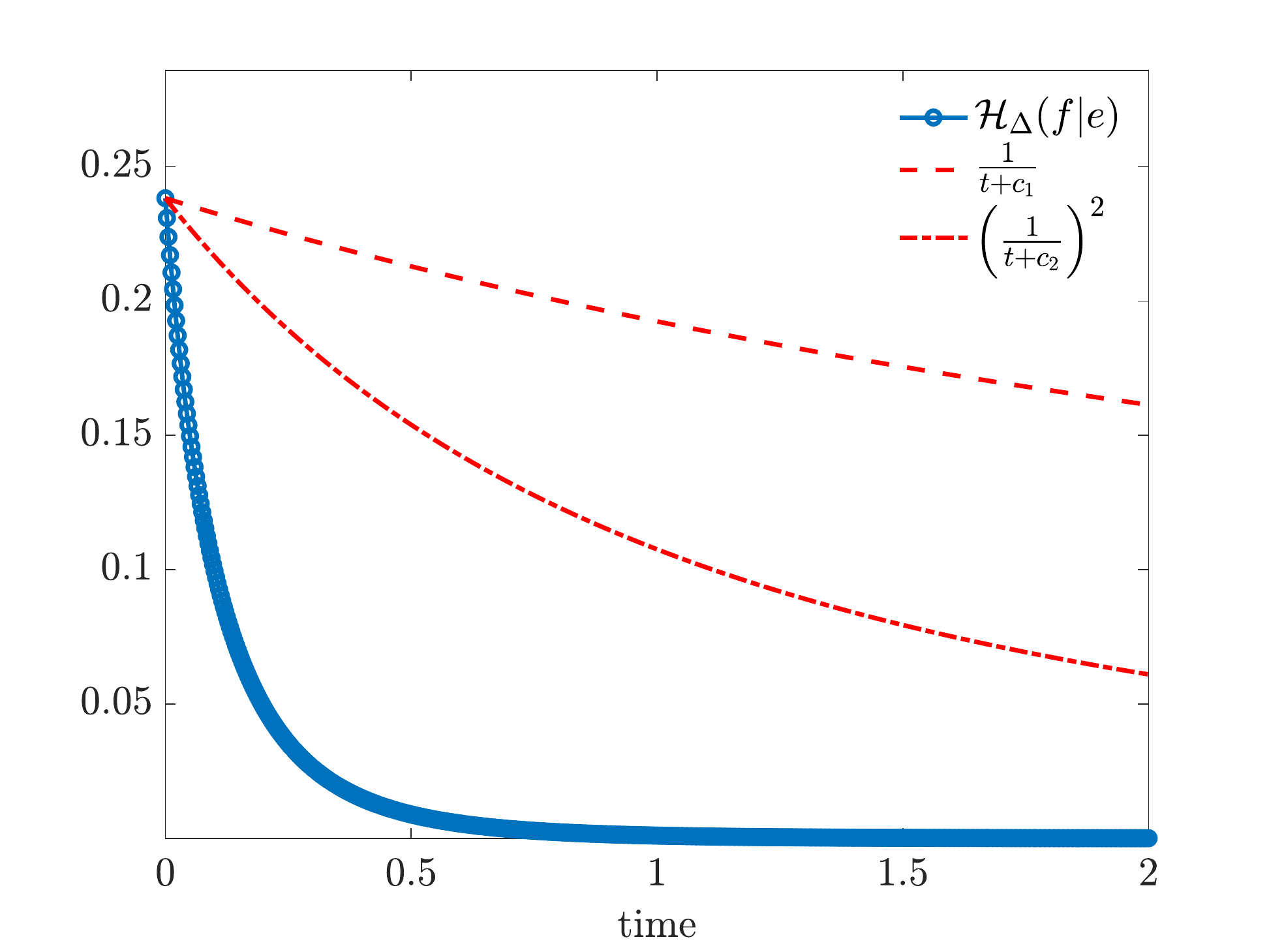}
\caption{\textbf{Test 1}. Left: evolution of the relative entropy computed with increasing number of gridpoints $N = 21,41,81$. Right: comparison between the evolution of $\mathcal H_\Delta$ with decaying time functions, we considered $c_1,c_2>0$ such that at time $t = 0$ these assume the same value of $\mathcal H_\Delta$. Relevant numerical parameters are coherent with Figure \eqref{fig:1}.}
\label{fig:entropy}
\end{figure}

\subsection{\emph{Test 2. Generalized Gaussian case}}

In this test we consider the time evolution of a density function described by \eqref{FP} with $\Theta_\epsilon(x)$ defined in \eqref{tangh} and diffusion given by \eqref{ke-g}. As initial distribution we consider again the one introduced in \eqref{eq:f0_g}. Now, the corresponding stationary distribution reads
\[
e(x) = C_\beta \dfrac{1}{(1+x^2)^\beta}, \qquad C_\beta = \dfrac{1}{\sqrt{\pi}}\dfrac{\Gamma(\beta)}{\Gamma(\beta-1/2)}, \quad \beta > \frac 12. 
\]
In Figure \ref{fig:2} we compute the numerical approximations of the introduced problem for $\beta = 1,2,3$. Note that the case $\beta = 1$ is critical, since we have no theoretical results which guarantee rates of convergence.

In the right plot we report the evolution of the relative $L^1$ error computed in the case $\beta = 3$ with respect to the exact solution using $N = 101$ points with various quadrature rules. It is possible to observe how the different integration methods capture the equilibrium distribution with different accuracy depending on the approximation of the weights \eqref{eq:delta}-\eqref{eq:lambda}. In particular, low order quadrature rules achieve the numerical steady distribution faster than high order quadratures and with Gauss-Legendre quadrature we essentially reach machine precision. The same evolution of the relative $L^1$ error can be obtained with different values of the parameter $\beta$. It is worth to remark that the quantity $\kappa_\epsilon^\prime$ has been computed, thanks to Remark \ref{rem:1}, as follows
\[
\kappa_\epsilon^\prime(x) = -\Theta_\epsilon(x) + \dfrac{2\beta x}{1+x^2}\kappa_\epsilon(x).
\]

In Figure \eqref{fig:2}, left plot, we report the comparison between the exact distributions for various values of the parameter $\beta$, and the corresponding numerical approximations at time $T = 40$ in the domain $[-L,L]$, $L = 5$ discretized with $N = 101$ gridpoints. We considered the RK4 time integration method with $\Delta t = \Delta x^2/L$. 

\begin{figure}
\centering
\includegraphics[scale = 0.29]{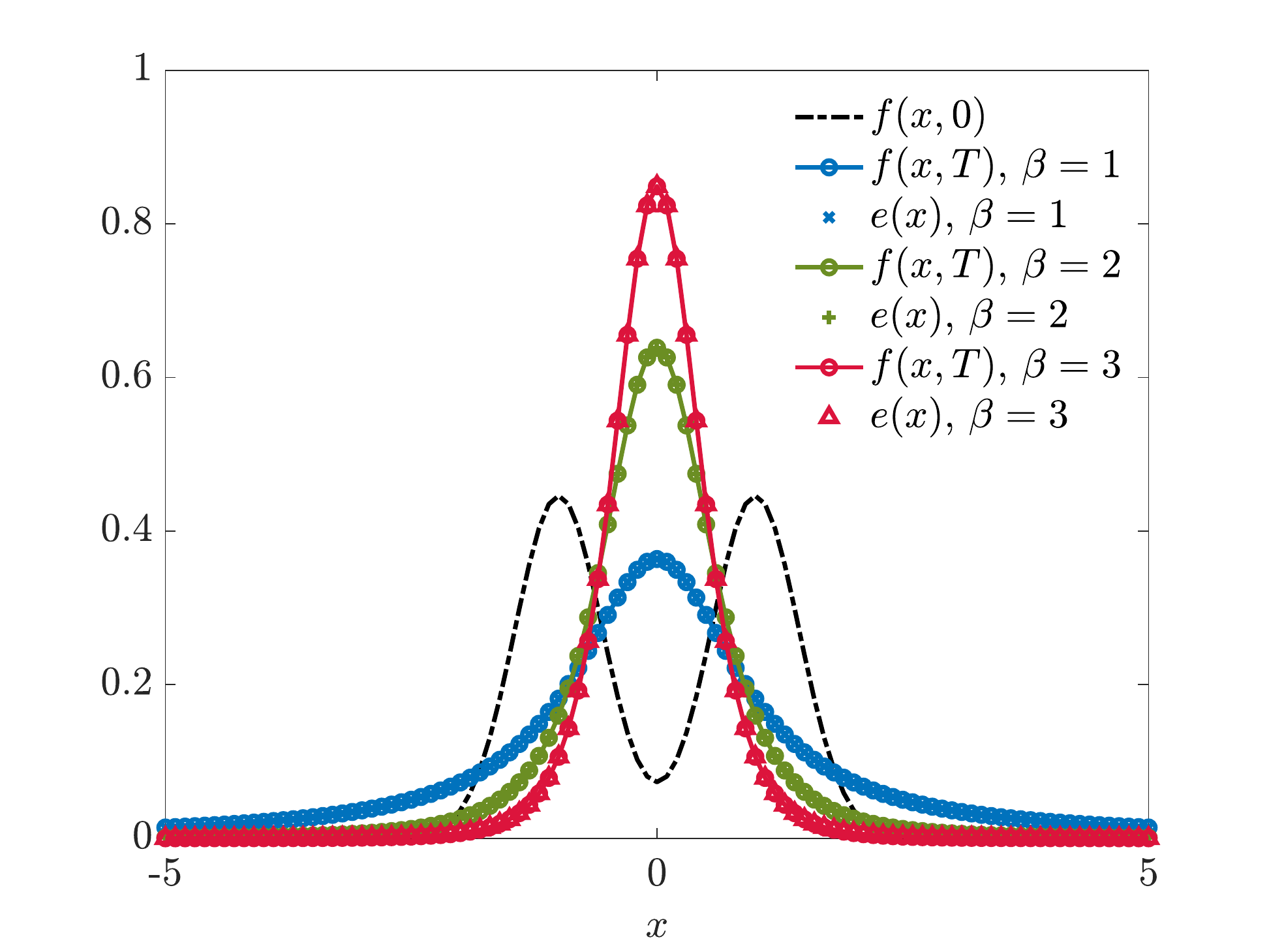}
\includegraphics[scale = 0.29]{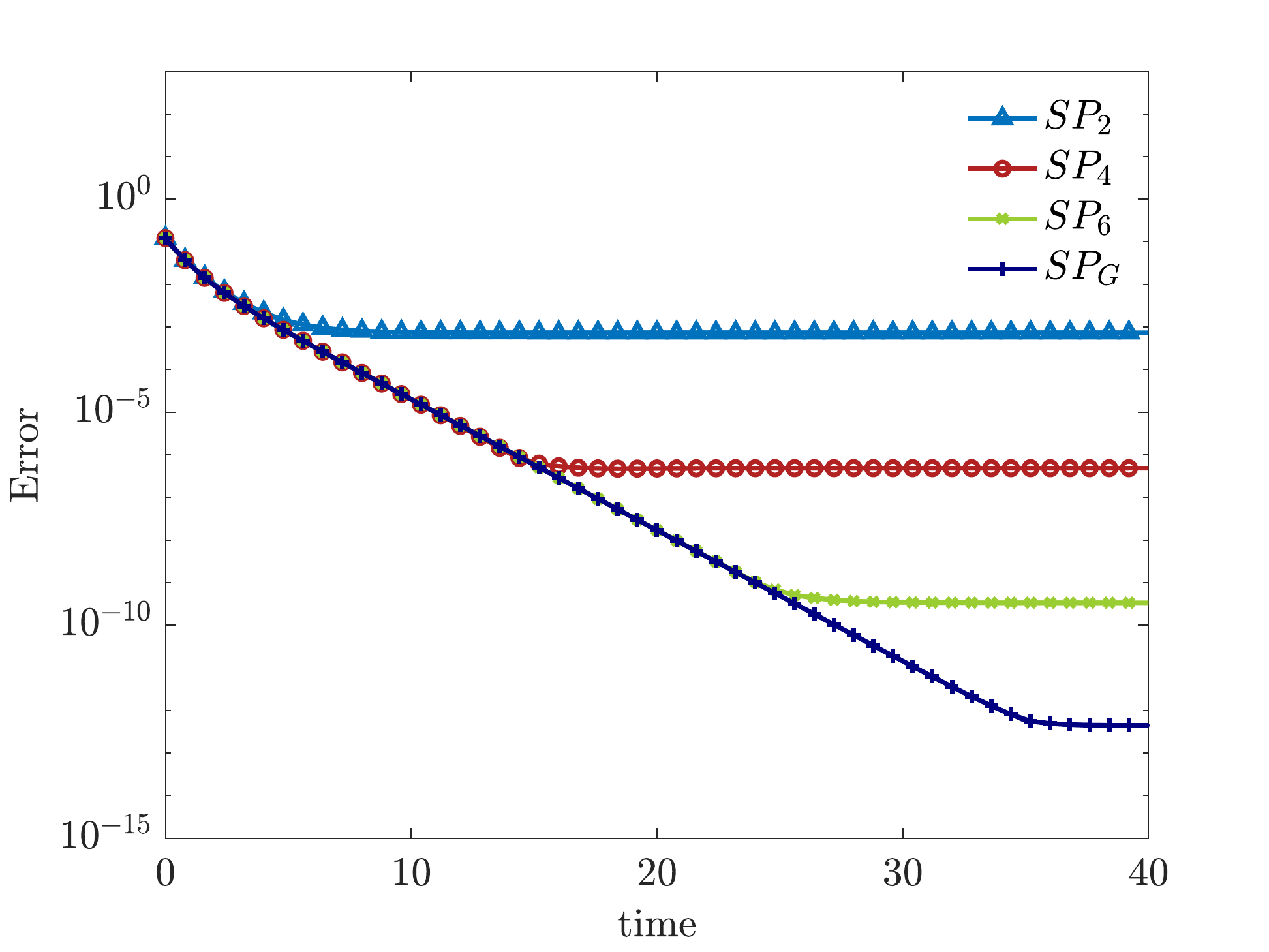}
\caption{\textbf{Test 2}. Left: evolution of the density $f(x,t)$ with initial condition $f(x,0)$ \eqref{eq:f0_g} in black dashed line. We report different markers the exact distributions for $\beta = 1,2,3$ whereas with the continuous line we indicate the numerical approximation at time $T = 40$. Right: evolution of the $L^1$ error in the case $\beta = 3$ for different quadrature methods of the quantities \eqref{eq:delta}-\eqref{eq:lambda}. We considered $N = 101$ gridpoints in the domain $[-L,L]$ and $\Delta t = \Delta x^2/L$. }
\label{fig:2}
\end{figure}

Finally, in Figure \ref{fig:beta_entropy} we compare the trends to equilibrium of three generalized Gaussian distributions characterized by an increasing exponent $\beta$. We may clearly observe how, in agreement with the rigorous theoretical results of Section \ref{sec:unbounded}, for small values of $\beta$ the trends to equilibrium result slower. 

\begin{figure}
\centering
\includegraphics[scale = 0.29]{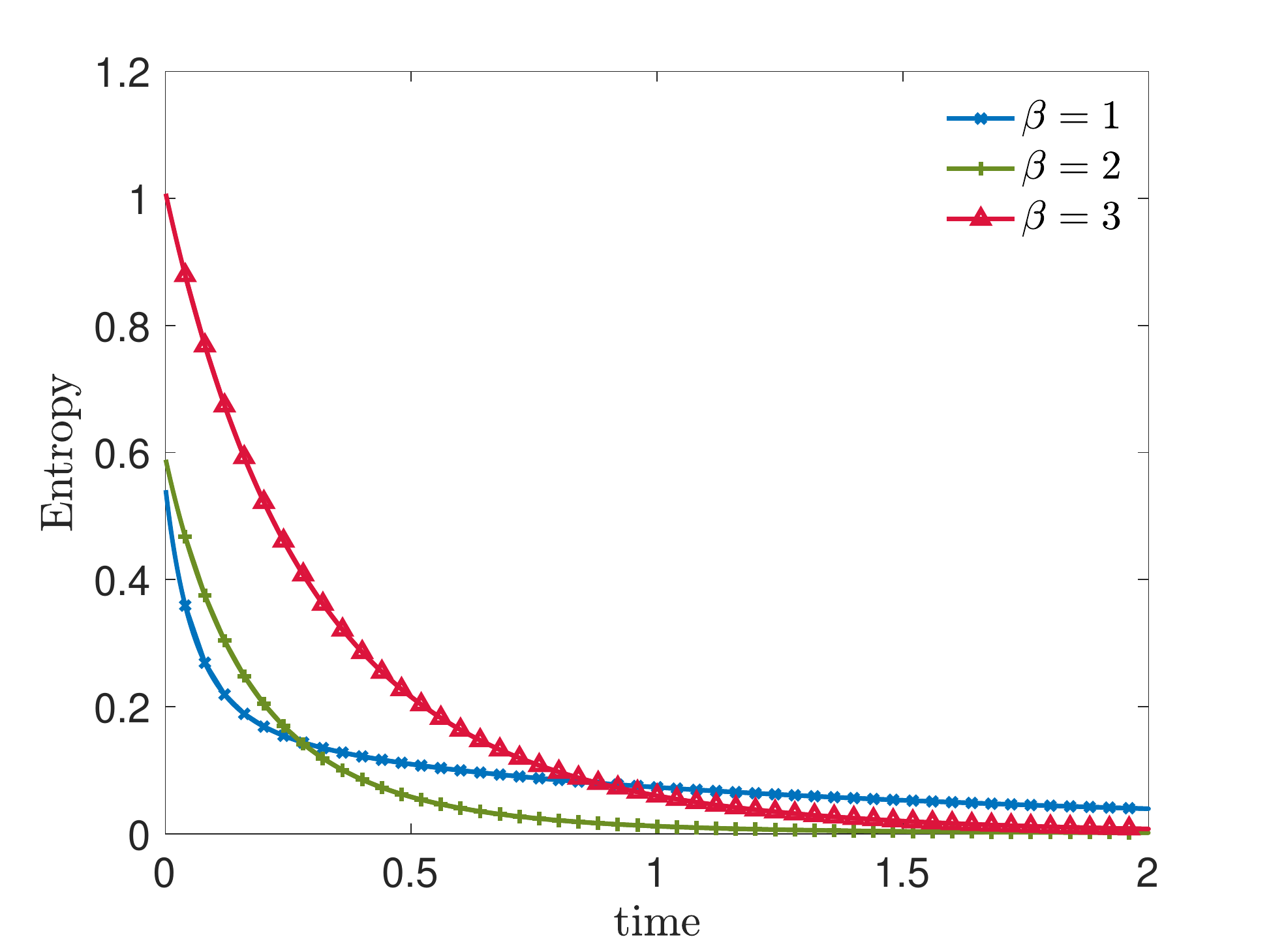}
\caption{\textbf{Test 2}. Evolution of the relative entropy for several values of $\beta = 1,2,3$. We considered a grid for  $[-L,L]$, $L = 20$, with $N = 201$ gridpoints. }
\label{fig:beta_entropy}
\end{figure}

\section{Conclusions}
The study of the rates of convergence to equilibrium for the solution to Fokker--Planck equations is a challenging problem which has been studied intensively both from the theoretical and numerical point of view. One of the key arguments to achieve this result relies in the time monotonicity  of the relative entropies, which express the physical idea of irreversibility. The case treated in this paper refers to a somewhat critical situation, characterized by a weak confinement and a strong diffusion coefficient, which play together to slow down the convergence rate. Nevertheless, it is shown that convergence to equilibrium still holds except in very particular situations which deserve to be further investigated. Numerical computations, based on a recently developed class of schemes that preserve structural properties of Fokker-Planck equations, support the theoretical analysis.

\begin{acknowledgements}
This paper was written within the activities of the GNFM of INDAM. 
The research was partially supported by the Italian Ministry of Education, University and Research (MIUR): 
Dipartimenti di Eccellenza Program (2018--2022) - Dept. of Mathematics F. Casorati, University of Pavia.
G.T. acknowledges support of the Institute for Applied Mathematics and Information Technologies (IMATI),  Pavia, Italy. G.T.  wishes to thank the editors of this volume for inviting him to contribute to the memory of the notable figure of Claudio Baiocchi, who was an esteemed colleague in the Department of Mathematics of the University of Pavia for many years. In the last thirty years of the last century, the research activity of Claudio Baiocchi was an essential link between theoretical and applied mathematicians of the Department of Mathematics  and of the former Institute of Numerical Analysis (IAN) of the CNR (now (IMATI)).
\end{acknowledgements}

%
%

\bibliographystyle{plain}

\end{document}